\newtheorem{theorem}{Theorem}[section]
\newtheorem{prop}[theorem]{Proposition}
\newtheorem{lem}[theorem]{Lemma}
\newtheorem{coro}[theorem]{Corollary}
\newtheorem{thm}[theorem]{Theorem}
\newtheorem{rem}[theorem]{Remark}
\newtheorem{exam}[theorem]{Example}
\newcommand{\ideal}[1]{\ensuremath{\left\langle #1 \right\rangle}}
\DeclareMathOperator{\Aut}{Aut}
\DeclareMathOperator{\Der}{Der}
\DeclareMathOperator{\GL}{GL}
\DeclareMathOperator{\Hom}{Hom}
\DeclareMathOperator{\HLie}{HLie}
\newcommand{\C}{\mathbb{C}}
\newcommand{\Z}{\mathbb{Z}}
\newcommand{\N}{\mathbb{N}}
\newcommand{\g}{\mathfrak{g}}
\newcommand{\h}{\mathfrak{h}}
\newcommand{\p}{\mathfrak{p}}
\newcommand{\uu}{\mathfrak{u}}
\newcommand{\gl}{\mathfrak{gl}}
\newcommand{\ssl}{\mathfrak{sl}}
\newcommand{\MLie}{{\rm HLie}_m}
\newcommand{\RLie}{{\rm HLie}_r}
\newcommand{\ILie}{{\rm HLie}_i}
\newcommand{\ra}{\longrightarrow}
\newcommand{\B}{\mathcal{B}}
\newcommand{\HH}{\mathcal{H}}
\newcommand{\V}{\mathcal{V}}
\newcommand{\hbo}{$\hfill\Diamond$}
\begin{document}
\title{A Commutative Algebra Approach to Multiplicative \\Hom-Lie Algebras}
\def\shorttitle{A Commutative Algebra Method to Multiplicative Hom-Lie Algebras}

\author{Yin Chen}
\address{School of Mathematics and Statistics, Northeast Normal University, Changchun, China \& Department of Mathematics and Statistics, Queen's University, Kingston, K7L 3N6, Canada}
\email{ychen@nenu.edu.cn}

\author{\scshape Runxuan Zhang}
\address{School of Mathematics and Statistics, Northeast Normal University, Changchun, China}
\email{zhangrx728@nenu.edu.cn}

\begin{abstract}
Let $\mathfrak{g}$ be a finite-dimensional complex Lie algebra and $\textrm{HLie}_{m}(\mathfrak{g})$ be the affine variety of all multiplicative Hom-Lie algebras on $\mathfrak{g}$.
We use a method of computational ideal theory to describe $\textrm{HLie}_{m}(\mathfrak{gl}_{n}(\mathbb{C}))$, showing that $\textrm{HLie}_{m}(\mathfrak{gl}_{2}(\mathbb{C}))$ consists of two 1-dimensional and one 3-dimensional irreducible components and 
$\textrm{HLie}_{m}(\mathfrak{gl}_{n}(\mathbb{C}))=\{\textrm{diag}\{\delta,\dots,\delta,a\}\mid \delta=1\textrm{ or }0,a\in\mathbb{C}\}$ for $n\geqslant 3$. We construct a new family of multiplicative Hom-Lie algebras on the Heisenberg Lie algebra $\mathfrak{h}_{2n+1}(\mathbb{C})$ and characterize the affine varieties $\textrm{HLie}_{m}(\mathfrak{u}_{2}(\mathbb{C}))$ and $\textrm{HLie}_{m}(\mathfrak{u}_{3}(\mathbb{C}))$.
We also study the derivation algebra $\textrm{Der}_{D}(\mathfrak{g})$ of a multiplicative Hom-Lie algebra
$D$ on $\mathfrak{g}$ and, under some hypotheses on $D$, we prove that the Hilbert series $\mathcal{H}(\textrm{Der}_{D}(\mathfrak{g}),t)$ is a rational function.
\end{abstract}

\date{\today}
\thanks{2020 \emph{Mathematics Subject Classification}. 17B61; 13P10; 13P15.}
\keywords{Hom-Lie algebra; general linear Lie algebra; Heisenberg Lie algebra.}
\maketitle \baselineskip=15.8pt

\dottedcontents{section}[1.16cm]{}{1.8em}{5pt}
\dottedcontents{subsection}[2.00cm]{}{2.7em}{5pt}

\section{Introduction}
\setcounter{equation}{0}
\renewcommand{\theequation}
{1.\arabic{equation}}
\setcounter{theorem}{0}
\renewcommand{\thetheorem}
{1.\arabic{theorem}}

\noindent In the last fifteen years, Hom-algebra structures have occupied an important place in
nonassociative algebras, deformation theory and mathematical physics.
Realizing Hom-Lie algebra structures on a vector space has substantial ramifications in the study of representation theory,
deformations of infinite-dimensional Lie algebras and generalized Yang-Baxter equations, whereas
finding a powerful method to describe these Hom-Lie algebras is indispensable in developing efficient classifying tools.
Jin-Li's Theorem \cite{JL08}*{Proposition 2.1}, proving that all Hom-Lie algebras on complex simple finite-dimensional
Lie algebras except for $\ssl_{2}(\C)$ are trivial, serves as a motivational example.
Our primary objective is to describe multiplicative Hom-Lie algebra structures on several typical families of complex finite-dimensional Lie algebras and our approach depends upon techniques from commutative algebra.

Motivated by characterizing algebraic structures of some $q$-deformations of the Witt and the Virasoro algebras,
\cite{HLS06} originally introduced the notion of a Hom-Lie algebra (on a vector space), showing that
these $q$-deformations have a Hom-Lie algebra structure. This initial definition of a Hom-Lie algebra was also modified slightly to the current version; see  \cite{MS08}, \cite{BM14} and \cite{She12}.
Recently, the structure and representation theory of Hom-Lie algebras, Hom-associative, and even Hom-Novikov algebras,  have been studied extensively; see for example \cite{HLS06}, \cite{MS08}, \cite{Yau11},  \cite{ZHB11} and references therein.
We concentrate on Hom-Lie algebra structures on a finite-dimensional complex Lie algebra because
the well-developed structure theory of Lie algebras and related representation theory have been demonstrated to be useful
in solving such  problems; see \cite{Bau99}.

Let $\g$ be a finite-dimensional complex Lie algebra. A linear transformation $D$ on $\g$ is called a \textit{Hom-Lie algebra} structure on $\g$ if the Hom-Jacobi identity: $[D(x),[y,z]]+[D(y),[z,x]]+[D(z),[x,y]]=0$
holds for all $x,y,z\in \g$. A Hom-Lie algebra $D$ on $\g$ is said to be \textit{multiplicative} if $D$ is a Lie algebra homomorphism.
Inspired by \cite{JL08}, we wonder whether there exists a nontrivial (multiplicative) Hom-Lie algebra structure on non-semisimple complex Lie algebras and further, if there exist such Hom-Lie algebras, we also seek a systematic way to describe
and classify them up to isomorphism. Consolidating and comparing with  existing methods (see \cite{Rem18} and \cite{GDSSV20}), we take a point of view of affine varieties on the set of all Hom-Lie algebras and
multiplicative Hom-Lie algebras on $\g$. This means that techniques from computational ideal theory will be our main source of tools.

\subsection*{Affine varieties of Hom-Lie algebras}
We use $\dim_{\C}(*)$ and $\dim(*)$ to denote
the dimension and the Krull dimension of $*$ as a $\C$-vector space and an affine variety over the complex field $\C$, respectively.
Suppose $\dim_\C(\g)=n$ and $M_n(\C)$ denotes the affine space of all $n\times n$-matrices over $\C$.
With respect to a chosen basis  $\{e_1,e_2,\dots,e_n\}$ of $\g$, each element of $M_n(\C)$ corresponds to a linear
transformation on $\g$. The main objects of study in the present paper are the vector space
$\HLie(\g):=\{D\in M_n(\C)\mid D \textrm{ is a Hom-Lie algebra on }\g\}$ and the affine variety
$\MLie(\g):=\{D\in \HLie(\g)\mid D \textrm{ is multiplicative}\}.$
We also refer to an element $D\in \MLie(\g)$ as a \textit{regular} Hom-Lie algebra on $\g$ if it is an automorphism of $\g$,
and we refer to $D$ as \textit{involutive} if it is an involution.
Let $\RLie(\g)$ and $\ILie(\g)$ denote the subsets of all regular Hom-Lie algebras and  of all involutive Hom-Lie algebras on
$\g$ respectively. The following set inclusions hold:
\begin{equation}
\label{eq1.1}
\ILie(\g)\subseteq \RLie(\g)\subseteq \MLie(\g)\subseteq \HLie(\g).
\end{equation}
We observe that
a Hom-Lie algebra structure $D\in\HLie(\g)$ can be determined by  finitely many polynomial equations in at most $n^2$ variables, which means that $\HLie(\g), \MLie(\g), \RLie(\g)$ and $\ILie(\g)$ all can be viewed as affine varieties in an affine space of dimension at most $n^2$. Moreover, it is easy to see that $\HLie(\g)$ is a linear variety and thus it is
irreducible; $\MLie(\g), \RLie(\g)$ and $\ILie(\g)$ are not  linear in general and thus they may not be irreducible.
We also note that $\RLie(\g)\subseteq \GL(\g)\cap\Aut(\g)$ is a linear algebraic group (not necessarily irreducible), where
$\GL(\g)$ and $\Aut(\g)$ denote the general linear group on $\g$ and the automorphism group of $\g$ respectively.

\subsection*{Main results} Specifically, we study multiplicative Hom-Lie algebra structures on three important families of
 finite-dimensional  complex Lie algebras: the general linear Lie algebra $\gl_{n}(\C)(n\geqslant 2)$,
 the Heisenberg Lie algebra $\h_{2n+1}(\C)(n\geqslant 1)$  and the Lie algebra $\uu_{n}(\C)(n\geqslant 2)$ of upper triangular matrices, which are the most typical examples of reductive, nilpotent  and solvable Lie algebras respectively.

Theorem \ref{thm2.10} is our first theorem that describes the geometric structure of the affine variety
$\MLie(\gl_{2}(\C))$, showing that $\textrm{HLie}_{m}(\mathfrak{gl}_{2}(\mathbb{C}))$ consists of two 1-dimensional and one 3-dimensional irreducible components. The key to our proof is to apply a Gr\"{o}bner basis method from computational ideal theory in commutative algebra; compared with \cite{XJL15}*{Corollary 3.4} for the case of $\ssl_{2}(\C)$.
Our second result establishes a complete description of $\MLie(\gl_{n}(\C))$ for $n\geqslant 3$. Based on  Jin-Li's Theorem on
$\MLie(\ssl_{n}(\C))$ and the close relationship between $\ssl_{n}(\C)$ and  $\gl_{n}(\C)$, we prove that
if $D\in \MLie(\gl_{n}(\C))$, then $D$ must be equal to a diagonal matrix $\textrm{diag}\{\delta,\dots,\delta,a\}$
where $\delta$ is either 1 or 0 and $a\in\C$; see Theorem \ref{thm3.3} for details. The two results also positively answer
the previous question on the existence of nontrivial Hom-Lie algebra structures on $\gl_{n}(\C)$.

Hom-Lie algebra structures on a nilpotent or solvable Lie algebra are more complicated than that on a semi-simple or reductive Lie algebra. Proposition \ref{prop4.2}, as our third result, gives a new family of multiplicative Hom-Lie algebras on $\h_{2n+1}(\C)$.
As applications we prove that all the three containments appeared in (\ref{eq1.1}) are strict
for the case $\g=\h_{2n+1}(\C)$; see Corollaries \ref{coro4.3}--\ref{coro4.5}.
Our final major result is about the derivation algebra $\Der_{D}(\g)$ of a Hom-Lie algebra $D$ on a Lie algebra $\g$ and
the corresponding Hilbert series $\HH(\Der_{D}(\g),t)$. The derivation algebra of a Hom-Lie algebra was introduced and studied
by \cite{She12}, aiming at developing the representation and cohomology theory of Hom-Lie algebras.
Inspired by the classical topic on the rationality of a Hilbert series \cite[Theorem 3.3.1]{NS02} and compared with \cite[Theorem 1.4]{CCZ21},
we prove in Theorem \ref{thm5.5} that
under some additional hypotheses, $\HH(\Der_{D}(\g),t)$ is a rational function.

\subsection*{Organization}
Section \ref{sec2} is devoted to describing  three irreducible components of  the affine variety $\MLie(\gl_2(\C))$,
particularly demonstrating that there exists a nontrivial Hom-Lie algebra structure on $\gl_2(\C)$.
Section \ref{sec3} completely characterizes multiplicative Hom-Lie algebra structures on $\gl_{n}(\C)$ for $n\geqslant 3$.
In Section \ref{sec4}, we study Hom-Lie algebra structures on the Heisenberg Lie algebra $\h_{2n+1}(\C)$ and
the upper triangular Lie algebra $\uu_{n}(\C)$. Section \ref{sec5} is mainly concentrated on
the rationality of the Hilbert series of the derivation algebra of Hom-Lie algebras.

\subsection*{Acknowledgements} This work was partially supported by NNSF of China (No. 11301061).
The authors would like to thank the two referees and the editor for their helpful suggestions and comments.

\section{The Affine Variety $\MLie(\gl_2(\C))$}\label{sec2}
\setcounter{equation}{0}
\renewcommand{\theequation}
{2.\arabic{equation}}
\setcounter{theorem}{0}
\renewcommand{\thetheorem}
{2.\arabic{theorem}}

\noindent After summarizing the fundamental facts on $\gl_2(\C)$ and some preparations for the vanishing ideal of the affine variety $\MLie(\gl_2(\C))$, we capitalize on a Gr\"{o}bner basis method from computational ideal theory to analyze the geometric structure of  $\MLie(\gl_2(\C))$.

\subsection*{Basics for $\gl_2(\C)$}
We start with some basic facts on  $\gl_2(\C)$. For $1\leqslant i,j\leqslant 2$, let $E_{ij}$ be the $2\times 2$-matrix
 in which the $(i,j)$-entry is 1 and 0 otherwise.
For notational brevity, we set $e_{1}:=E_{11}, e_{2}:=E_{12},e_{3}:=E_{21}$ and $e_{4}:=E_{22}$. Then $\{e_{1},\dots,e_{4}\}$
is the standard basis of  $\gl_2(\C)$, subject to the following nontrivial relations:
\begin{equation}\label{relgl2}
[e_1,e_2]=e_2, [e_1,e_3]=-e_3, [e_2,e_3]=e_1-e_4,[e_2,e_4]=e_2,[e_3,e_4]=-e_3.
\end{equation}
Let $D\in\MLie(\gl_2(\C))$ be an arbitrary element. With respect to this standard basis of  $\gl_2(\C)$, we always identify $D$ with
a matrix $(a_{ij})_{4\times 4}^{T}$ in $M_{4}(\C)$. This means that
\begin{equation}\label{actgl2}
D(e_{i})=\sum_{j=1}^{4} a_{ij}e_{j}
\end{equation}
for $i=1,\dots,4$.

\subsection*{The vanishing ideal of $\MLie(\gl_2(\C))$}
Let $A:=\C[x_{ij}\mid 1\leq i,j\leq 4]$ be the polynomial ring in 16 variables. Then $A$ can be viewed as the coordinate ring of the  affine space $M_{4}(\C)$ of all $4\times 4$-matrices over $\C$ in the natural way. To articulate the vanishing ideal of $\MLie(\gl_2(\C))$, we need to define the following 23 polynomials in $A$:
\begin{eqnarray*}
f_1:= x_{11}-x_{44},&&
f_{2}:=x_{23}x_{32}+x_{41}x_{44}-x_{42}x_{43}-(x_{41}^2+x_{41}+x_{44}^2-x_{44})/2,\\
f_3:= x_{12}+x_{42},&& f_4:= x_{23}x_{41} - x_{23}x_{44} - x_{23} + 2x_{43}^2,\\
f_{5}:= x_{13}+x_{43},&& f_{6}:= x_{23}x_{42} - (x_{41}x_{43} -x_{43}x_{44} +x_{43})/2,\\
f_{7}:=x_{14} - x_{41},&&f_{8}:= x_{32}x_{41} - x_{32}x_{44} - x_{32} + 2x_{42}^2,\\
f_{9}:=x_{21} + x_{43},&&f_{10}:= x_{32}x_{43} - (x_{41}x_{42} - x_{42}x_{44} +x_{42})/2,\\
f_{11}:= x_{22} - x_{33},&&f_{12}:= x_{33}^2+ x_{41}x_{44} - x_{42}x_{43} - (x_{41}^2  -x_{41}  +x_{44}^2 +x_{44})/2,\\
f_{13}:= x_{23}x_{33}+x_{43}^2,&&f_{14}:= x_{33}x_{41} - x_{33}x_{44} + x_{33} + 2x_{42}x_{43},\\
f_{15}:=x_{24} - x_{43},&&f_{16}:= x_{33}x_{42} -(x_{41}x_{42} - x_{42}x_{44} -x_{42})/2,\\
f_{17}:= x_{31} + x_{42},&&f_{18}:= x_{33}x_{43} - (x_{41}x_{43} - x_{43}x_{44} -x_{43})/2,\\
f_{19}:= x_{32}x_{33} + x_{42}^2,&&f_{20}:=x_{42}(x_{41}^2 - 2x_{41}x_{44} + 4x_{42}x_{43} + x_{44}^2 - 1),\\
f_{21}:= x_{34} - x_{42},&&f_{22}:=x_{43}(x_{41}^2 - 2x_{41}x_{44} + 4x_{42}x_{43} + x_{44}^2 - 1),
\end{eqnarray*} and $f_{23}:= x_{41}^3 - 3x_{41}^2x_{44} + 4x_{41}x_{42}x_{43} + 3x_{41}x_{44}^2 - x_{41} - 4x_{42}x_{43}x_{44} - x_{44}^3 + x_{44}.$ Throughout this section we let $I$ be the ideal generated by $\{f_i\mid 1\leq i\leq 23\}$ in $A$, and we will show that $I$ is exactly the vanishing ideal of $\MLie(\gl_2(\C))$.  One might be interested in how to obtain these polynomials $f_i$. Indeed, the way of constructing these $f_i$ is quite direct by using the definition of multiplicative Hom-Lie algebra, i.e., choosing a generic matrix $D$ in $\MLie(\gl_2(\C))$, the properties of algebraic homomorphism and Hom-Jacobi 
identity lead to a bunch of polynomial equations in the entries of $D$. By deleting redundant equations, one will reveal the above 23 polynomials $f_i$ such that $f_i(D)=0$ for all $i$; see the proof of Lemma \ref{lem2.1} below. For any ideal $J$ of $A$, we also define $\V(J):=\{T\in M_{4}(\C)\mid f(T)=0, \textrm{ for all }f\in J\}$. Note that $\V(J)=\V(\sqrt{J})$.

\begin{lem}\label{lem2.1}
$\MLie(\gl_2(\C))\subseteq\V(I)$.
\end{lem}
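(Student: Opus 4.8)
The plan is to convert the two defining conditions of a multiplicative Hom-Lie algebra on $\gl_2(\C)$ into an explicit polynomial system in the entries of $D$, and then to verify that this system forces every generator $f_k$ of $I$ to vanish on $D$.

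Fix $D\in\MLie(\gl_2(\C))$ and write $D(e_i)=\sum_{j=1}^{4}a_{ij}e_j$ as in \eqref{actgl2}; under the identification of $D$ with $(a_{ij})_{4\times 4}^{T}\in M_4(\C)$ this is the point at which the coordinate $x_{ij}$ takes the value $a_{ij}$. Since the Hom-Jacobi identity is trilinear in $(x,y,z)$ and the homomorphism condition $D([x,y])=[D(x),D(y)]$ is bilinear in $(x,y)$, it is enough to impose them on basis vectors; thus $D\in\MLie(\gl_2(\C))$ is equivalent to requiring (i) $D([e_i,e_j])=[D(e_i),D(e_j)]$ for the five pairs $(i,j)$ with $[e_i,e_j]\neq 0$ in \eqref{relgl2}, and (ii) $[D(e_i),[e_j,e_k]]+[D(e_j),[e_k,e_i]]+[D(e_k),[e_i,e_j]]=0$ for the four triples $1\leq i<j<k\leq 4$. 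Expanding each of these identities in the basis $\{e_1,\dots,e_4\}$ by means of \eqref{relgl2} produces a finite set $R\subseteq A$ of polynomials — at most $5\cdot 4+4\cdot 4=36$ of them, before the trivial and repeated ones are discarded — such that $\MLie(\gl_2(\C))=\V(\ideal{R})$. Hence it suffices to show that every $f_k$ lies in the ideal $\ideal{R}$, for then $f_k$ vanishes at every point of $\V(\ideal{R})$.

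I would carry out this verification by degree. Comparing coefficients of single basis vectors on the two sides of the identities in (i) and (ii) produces the nine linear generators $f_1,f_3,f_5,f_7,f_9,f_{11},f_{15},f_{17},f_{21}$, each of which simply equates two entries of $D$ up to sign: for instance, the Hom-Jacobi identity applied to triples $(e_1+e_4,y,z)$ with $y,z\in\ssl_2(\C)$ yields $[D(e_1+e_4),\ssl_2(\C)]=0$, hence $D(e_1+e_4)\in\C(e_1+e_4)$, which accounts for several of them, while multiplicativity gives $D(\ssl_2(\C))\subseteq\ssl_2(\C)$. Substituting these linear relations into the remaining scalar components of (i) collapses them to the eleven quadratic generators $f_2,f_4,f_6,f_8,f_{10},f_{12},f_{13},f_{14},f_{16},f_{18},f_{19}$. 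Finally, substituting all the relations obtained so far into the Hom-Jacobi identities in (ii) and eliminating the already-determined entries leaves precisely the three cubic generators $f_{20},f_{22},f_{23}$; their common quadratic factor $x_{41}^2-2x_{41}x_{44}+4x_{42}x_{43}+x_{44}^2-1$ is the discriminant-type expression attached to the $2\times 2$ matrix $\bigl(\begin{smallmatrix}x_{41}&x_{42}\\x_{43}&x_{44}\end{smallmatrix}\bigr)$ recording $D(e_4)$. Once $I\subseteq\ideal{R}$ has been established, $\MLie(\gl_2(\C))=\V(\ideal{R})\subseteq\V(I)$ follows.

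The main obstacle is this last step: the three cubic relations are not the componentwise form of any single identity in (i)--(ii), but appear only after the linear substitutions and a round of elimination, so in practice one runs a Gr\"{o}bner basis computation on $R$ and recognizes the output, taking care with signs and with the transpose convention of \eqref{actgl2}. A more conceptual but less self-contained variant would instead reduce, via multiplicativity and the Hom-Jacobi identity, to the two facts $D(\ssl_2(\C))\subseteq\ssl_2(\C)$ and $D(e_1+e_4)\in\C(e_1+e_4)$, invoke the known description of $\MLie(\ssl_2(\C))$ in \cite{XJL15}*{Corollary 3.4}, and then re-expand everything in the basis $\{e_1,\dots,e_4\}$; but that re-expansion is essentially the same computation, so for this lemma the direct route is the most economical.
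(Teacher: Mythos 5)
Your proposal is correct and takes essentially the same route as the paper: both reduce membership in $\MLie(\gl_2(\C))$ to the multiplicativity and Hom-Jacobi conditions imposed on the standard basis $\{e_1,\dots,e_4\}$ via (\ref{relgl2}) and (\ref{actgl2}), and then verify by a direct (in practice Gr\"{o}bner-basis-assisted) computation that each generator $f_1,\dots,f_{23}$ vanishes on the resulting solution set, which is exactly what the paper compresses into ``a long but direct calculation.'' Your degree-by-degree bookkeeping and the factorization remark that $f_{20},f_{22},f_{23}$ are multiples of $(x_{41}-x_{44})^2+4x_{42}x_{43}-1$ are accurate refinements of that same verification rather than a different method.
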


\begin{proof}
Suppose $D=(a_{ij})^{T}_{4\times 4}\in \MLie(\gl_2(\C))$ is an arbitrary element. It suffices to show that $D\in\V(I)$; equivalently the valuation of every $f_i$ at $D$ is zero for $1\leqslant i\leqslant 23$. Indeed,  since $D$ is an algebraic homomorphism and it satisfies the Hom-Jacobi identity, we see that $D([e_i,e_j])=[D(e_i),D(e_j)]$ and $[D(e_i),[e_j,e_k]]+[D(e_j),[e_k,e_i]]+[D(e_k),[e_i,e_j]]=0$, for all $1\leq i,j,k\leq 4$. Putting these equations together, we can use the relations among $e_1,\dots,e_4$ in (\ref{relgl2}) and the rule in (\ref{actgl2}) to solve them. Deleting redundant equations, we finally obtain 23 polynomial equations in $a_{ij}$.  These  $f_i$ are the corresponding polynomials of the equations. Hence, $f_i(D)=0$ for all $i$. 
\end{proof}

\begin{rem}\label{rem2.2}
{\rm
In fact, we will show that the two affine varieties $\MLie(\gl_2(\C))$ and $\V(I)$ are equal. By Lemma \ref{lem2.1}, it suffices to show that $\V(I)$ is contained in $\MLie(\gl_2(\C))$. To achieve this, we need to investigate the geometric structure of $\V(I)$. More precisely, our first step is to find all irreducible components $\V(\p_1),\dots,\V(\p_k)$ of $\V(I)$ for some $k\in\N^+$, and our last step is to show that every element in each component $\V(\p_i)$ is a multiplicative Hom-Lie algebra structure on
$\gl_{2}(\C)$, where all $\p_{i}$ are prime ideals of $A$. \hbo
}\end{rem}

\subsection*{Irreducible components of $\V(I)$}
We will see that the affine variety $\V(I)$ has three irreducible components: $\V(\p_1),\V(\p_2),\V(\p_3)$. To better understand  these components, we need to define the following six auxiliary polynomials in $A$:
\begin{eqnarray*}
\alpha:=x_{14}-x_{44}, &\beta:=x_{41}-x_{44},& h:=\beta^{2}+\beta.\\
g_1:=x_{22}-(\beta-1)/2,& g_2:=x_{23}x_{32}+x_{42}x_{43}-(\beta+1)/2,&
 g_{3}:=\beta^2+4x_{42}x_{43}-1.
\end{eqnarray*}
We also define four ideals of $A$ as follows:
\begin{eqnarray*}
\p_{1} & := & (f_1,\alpha,\beta,x_{12},x_{13},x_{21},x_{22},x_{23},x_{24},x_{31},x_{32},x_{33},x_{34},x_{42},x_{43}),\\
\p_{2} & := & (f_1,\alpha+1,\beta+1,x_{12},x_{13},x_{21},x_{22}-1,x_{23},x_{24},x_{31},x_{32},x_{33}-1,x_{34},
x_{42},x_{43}),\\
\p_{3} & := &(f_{1},f_{3},f_{4},\dots,f_{10},f_{15},f_{17},f_{21},g_1,g_{2},g_{3}),\\
\p&:=&(f_{1},f_{7},h,x_{12},x_{13},x_{21},x_{22}+\beta,x_{23},x_{24},x_{31},x_{32},x_{33}+\beta,x_{34},x_{42},x_{43}).
\end{eqnarray*}

\begin{lem}\label{lem2.3}
The ideals  $\p_1,\p_2$ and $\p_3$ are prime.
\end{lem}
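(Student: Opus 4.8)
The plan is to prove each of $\p_1,\p_2,\p_3$ prime by exhibiting an explicit presentation of $A/\p_i$ as an integral domain. For $\p_1$ and $\p_2$ this is immediate: each is generated by fifteen polynomials of degree at most one, and by inspection the degree-one homogeneous parts of these fifteen generators are $\C$-linearly independent — for $\p_1$ they are the twelve coordinates $x_{12},x_{13},x_{21},x_{22},x_{23},x_{24},x_{31},x_{32},x_{33},x_{34},x_{42},x_{43}$ together with the three forms $x_{11}-x_{44}$, $x_{14}-x_{44}$, $x_{41}-x_{44}$, and for $\p_2$ the same fifteen forms occur as the leading parts. Hence an invertible affine change of variables on $A$ carries $\p_i$ to the ideal generated by fifteen of the sixteen coordinate functions, so $A/\p_1\cong A/\p_2\cong\C[x_{44}]$ is a polynomial ring in one variable, in particular a domain; thus $\p_1,\p_2$ are prime (and $\V(\p_1),\V(\p_2)$ are affine lines).

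The case of $\p_3$ is the substantive one. Among its fifteen generators, the nine polynomials $f_1,f_3,f_5,f_7,f_9,f_{15},f_{17},f_{21},g_1$ have degree at most one, and they let us solve for the nine distinct coordinates $x_{11},x_{12},x_{13},x_{14},x_{21},x_{24},x_{31},x_{34},x_{22}$ in terms of the remaining ones; none of these nine coordinates occurs in the other six generators $f_4,f_6,f_8,f_{10},g_2,g_3$, so the corresponding substitution homomorphism induces
\[
A/\p_3\ \cong\ \C[x_{23},x_{32},x_{33},x_{41},x_{42},x_{43},x_{44}]\big/\bigl(f_4,f_6,f_8,f_{10},g_2,g_3\bigr).
\]
Now $x_{33}$ occurs in none of these six polynomials, and they involve $x_{41},x_{44}$ only through $\beta=x_{41}-x_{44}$ (e.g.\ $f_4$ becomes $x_{23}(\beta-1)+2x_{43}^2$, while $g_3=\beta^2+4x_{42}x_{43}-1$ already has this form); so, after the coordinate change $(x_{41},x_{44})\mapsto(\beta,x_{44})$, we obtain $A/\p_3\cong C[x_{33},x_{44}]$ where $C=\C[x_{23},x_{32},\beta,x_{42},x_{43}]/J$ and $J$ is the ideal generated by $f_4,f_6,f_8,f_{10},g_2,g_3$ rewritten in terms of $\beta$. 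Since a polynomial ring over a ring is a domain iff the ring is, the lemma reduces to proving that $J$ is prime.

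To prove $J$ prime I would exhibit a rational parametrization of $\V(J)$. Put $q:=1-p$ and let $\psi$ be the $\C$-algebra homomorphism from $\C[x_{23},x_{32},\beta,x_{42},x_{43}]$ to the localized polynomial ring $L:=\C[s,s^{-1},p,(1-p)^{-1}]$ determined by $x_{42}\mapsto s$, $x_{43}\mapsto pq/s$, $x_{32}\mapsto s^2/q$, $x_{23}\mapsto p^2q/s^2$, $\beta\mapsto 2p-1$. A direct substitution (using $p+q=1$, so that $\beta-1=-2q$ and $\beta+1=2p$) shows that each of the six generators of $J$ lies in $\Ker\psi$; hence $J\subseteq\Ker\psi$, and since $L$ is a localization of the polynomial ring $\C[s,p]$ it is a domain, so $\Ker\psi$ is prime and $\V(\Ker\psi)$ is an irreducible surface. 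It remains to prove the reverse containment $\Ker\psi\subseteq J$, equivalently that $J$ is already radical with $\dim\bigl(\C[x_{23},x_{32},\beta,x_{42},x_{43}]/J\bigr)=2$; this is the main obstacle. I expect to settle it by a Gröbner basis computation: compute a reduced Gröbner basis of $J$ for a suitable monomial order, read off the dimension, and check $J=\Ker\psi$ either by eliminating $s,p$ from the ideal of the graph of $\psi$ and verifying that the resulting generators lie in $J$, or directly by applying a primality test from computational ideal theory to the explicit generators of $\p_3$. The elimination bookkeeping and the Gröbner computation are routine; the only conceptual ingredient is the parametrization $\psi$, which simultaneously delivers the irreducibility and the prime ideal of the third component.
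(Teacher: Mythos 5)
Your handling of $\p_1$ and $\p_2$ is correct and is essentially the paper's argument (fifteen affine-linear generators with independent linear parts, so $A/\p_i\cong\C[x_{44}]$), and your reduction of $\p_3$ to the five-variable ideal $J=(f_4,f_6,f_8,f_{10},g_2,g_3)$ in $\C[x_{23},x_{32},\beta,x_{42},x_{43}]$ (adjoining the free variables $x_{33},x_{44}$) coincides with the paper's first step. Your parametrization $\psi$ is also correct as far as it goes: all six generators do vanish under the substitution, so $J\subseteq\Ker\psi$ and $\Ker\psi$ is prime.

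The problem is that this is where your argument stops, and the step you leave out is the entire content of the primality of $\p_3$. The containment $J\subseteq\Ker\psi$ only says that the closure of the image of $\psi$ is one piece of $\V(J)$; it gives no control over possible components of $\V(J)$ hidden inside the loci $\beta=1$ or $x_{42}=0$, which $\psi$ cannot see, nor over embedded primes of $J$. Moreover, your claimed reformulation (``equivalently, $J$ is radical of dimension $2$'') is not an equivalence: a radical two-dimensional $J$ contained in $\Ker\psi$ could still decompose as $\Ker\psi$ together with a second component, so radicality plus the dimension count would not yield $J=\Ker\psi$ without an additional irreducibility argument. You explicitly defer the reverse containment to an unexecuted Gr\"obner-basis computation or ``primality test,'' so the proof has a genuine gap exactly at its crux. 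For comparison, the paper closes this gap by an explicit computation of a different flavor: it proves $(J:(\beta-1))=J$, i.e.\ that $\beta-1$ is a non-zerodivisor modulo $J$, by computing a Gr\"obner basis of $J$ and of $J\cap(\beta-1)$ (via the standard $t$-trick); this lets it embed $B=\C[x_{23},x_{32},\beta,x_{42},x_{43}]/J$ into $B[\frac{1}{\beta-1}]$, where $f_4$ and $f_8$ make $x_{23}$ and $x_{32}$ redundant, so the localization is a localization of $\C[\beta,x_{42},x_{43}]/(\beta^2+4x_{42}x_{43}-1)$, a domain since that quadric is irreducible. Note that your $\psi$ is precisely this localization in disguise ($p=(\beta+1)/2$, $s=x_{42}$, $x_{23}=-2x_{43}^2/(\beta-1)$, $x_{32}=-2x_{42}^2/(\beta-1)$); so even along your route you must prove the saturation statement you postponed (and, if you go through the graph of $\psi$, you must also handle the inverted elements $s$ and $1-p$ in the elimination). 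Until that computation is actually carried out and recorded, the lemma is not proved for $\p_3$.
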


\begin{proof}
Clearly, $\p_1$ and $\p_2$ are prime ideals, since the generators of $\p_1$ and $\p_2$ are polynomials of degree 1. To show that $\p_3$ is prime, it suffices to show that
$A/\p_3$ is an integral domain. In fact,  $A/\p_3\cong \C[x_{23},x_{32},x_{41},\dots,x_{44}]/(g_2,g_3,f_{2i}\mid 2\leq i\leq 5)$. The latter is isomorphic to $$\Big(\C[\beta,x_{23},x_{32},x_{42},x_{43}]/(g_2,g_3,f_{2i}\mid 2\leq i\leq 5)\Big)[x_{44}].$$ Thus it suffices to show that $B:=\C[\beta,x_{23},x_{32},x_{42},x_{43}]/(g_2,g_3,f_{2i}\mid 2\leq i\leq 5)$ is an integral domain.

We claim that the image of $\beta-1$ in $B$ is not a zero-divisor. Let $J_1:=(g_2,g_3,f_{2i}\mid 2\leq i\leq 5)$ and $J_2:=(\beta-1)$ be ideals of $\C[\beta,x_{23},x_{32},x_{42},x_{43}]$.
The feasibility of this claim is equivalent to deciding whether the colon ideal $(J_1:J_2)$ is equal to $J_1$. By
\cite{DK15}*{Section 1.2.4} we see that $(J_1:J_2)=(\beta-1)^{-1}(J_1\cap J_2).$
We use the lexicographic ordering with $\beta>x_{23}>x_{32}>x_{42}>x_{43}$ in $\C[\beta,x_{23},x_{32},x_{42},x_{43}]$.
Applying \cite[Algorithm 1.1.9]{DK15}, a direct calculation shows that the following eight polynomials
\begin{eqnarray*}
\beta - 2x_{23}x_{32} - 2x_{42}x_{43} + 1, &  & x_{23}^2x_{32} + x_{23}x_{42}x_{43} - x_{23} + x_{43}^2, \\
 x_{23}^2x_{42} - x_{23}x_{43} + x_{43}^3, && x_{23}x_{32}^2 + x_{32}x_{42}x_{43} - x_{32} + x_{42}^2,\\
 x_{23}x_{32}x_{42} - x_{32}x_{43} + x_{42}^2x_{43}, && x_{23}x_{32}x_{43} - x_{23}x_{42} + x_{42}x_{43}^2,\\
 x_{23}x_{42}^2 - x_{32}x_{43}^2,&& x_{32}^2x_{43} - x_{32}x_{42} + x_{42}^3
\end{eqnarray*}
form a Gr\"{o}bner basis for the ideal $J_{1}$. Denote this Gr\"{o}bner basis by $G_{1}$. To derive a Gr\"{o}bner basis for $J_{1}\cap J_{2}$,
we consider the polynomial ring $\C[t,\beta,x_{23},x_{32},x_{42},x_{43}]$ and use the lexicographic ordering with $t>\beta>x_{23}>x_{32}>x_{42}>x_{43}$. Let $J_{12}$ be the ideal of $\C[t,\beta,x_{23},x_{32},x_{42},x_{43}]$ generated by
$(1-t)\cdot J_{1}+t\cdot J_{2},$
where the products are formed by multiplying each generator of $J_{1}$ and $J_{2}$ by $1-t$ and $t$ respectively. It follows from \cite[Corollary 2.1.1]{Vas98} that
$J_{1}\cap J_{2}=J_{12}\cap \C[\beta,x_{23},x_{32},x_{42},x_{43}].$
Applying  \cite[Algorithm 1.1.9]{DK15} again we observe  that  these polynomials:
\begin{eqnarray*}
&&t\beta - t,~~ tx_{23}x_{32} - t + (\beta x_{23}x_{32} + \beta x_{42}x_{43} + \beta - 3 x_{23}x_{32} - x_{42}x_{43} + 1)/2,\\
&&    tx_{23}x_{42} - tx_{43} + \beta x_{43}/2 - x_{23}x_{42} + x_{43}/2,~~ tx_{32}x_{43} - tx_{42} + \beta x_{42}/2 - x_{32}x_{43} + x_{42}/2,\\
  &&   tx_{42}^2 - \beta x_{32}/2 + x_{32}/2 - x_{42}^2,   ~~  tx_{43}^2 - \beta x_{23}/2 + x_{23}/2 - x_{43}^2,\\
      && t x_{42}x_{43} - (\beta x_{23} x_{32} +\beta x_{42}x_{43} - x_{23}x_{32} +x_{42}x_{43})/2
\end{eqnarray*}
together with $(\beta-1)\cdot G_{1}$, form a Gr\"{o}bner basis for $J_{12}$.
Thus \cite[Algorithm 1.2.1]{DK15}  implies that $(\beta-1)\cdot G_{1}$ is a Gr\"{o}bner basis for $J_{1}\cap J_{2}$. This means that $(J_{1}:J_{2})$ can be generated by $G_{1}$ and so $(J_1:J_2)=J_1$, the claim follows.

Hence, $B$ can be embedded  into $B^*:=B[\frac{1}{\beta-1}]$ and we need only to show that $B^*$ is an integral domain. Since the images of $x_{23}$ and $x_{32}$ in $B$ can be expressed by the images of  $x_{42}$, $x_{43}$ and $(\beta-1)^{-1}$, it follows that $B^*\cong (\C[\beta,x_{42},x_{43}]/(\beta^2+4x_{42}x_{43}-1)[\frac{1}{\beta-1}]$. As $\beta^2+4x_{42}x_{43}-1$ is irreducible, $B^*$ is isomorphic to a localization of an integral domain. Thus $B^*$ is also an integral domain.
The proof is completed.
\end{proof}

Lemma \ref{lem2.3} has the following immediate consequences.

\begin{coro}\label{coro2.4}
Let $C_{a}:=\begin{pmatrix}
           a& 0&0& a\\
           0& 0&0& 0\\
            0& 0&0& 0\\
            a& 0&0& a\\
         \end{pmatrix}$ and $D_{a}:=\begin{pmatrix}
           a& 0&0& a-1\\
           0& 1&0& 0\\
            0& 0&1& 0\\
            a-1& 0&0& a\\
         \end{pmatrix}$ with $a\in\C$.  Then $\V(\p_{1})=\{C_{a}\mid a\in \C\}$ and $\V(\p_{2})=\{D_{a}\mid a\in \C\}$ are 1-dimensional irreducible affine varieties.
\end{coro}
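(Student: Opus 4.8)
The plan is to exhibit explicit parametrizations of $\V(\p_1)$ and $\V(\p_2)$ and then read off that each is a closed curve. Since $\p_1$ is generated by $15$ polynomials of degree $1$ in the $16$ variables $x_{ij}$, the variety $\V(\p_1)$ is an affine linear subspace; I would simply solve the linear system. The generators $x_{12},x_{13},x_{21},x_{22},x_{23},x_{24},x_{31},x_{32},x_{33},x_{34},x_{42},x_{43}$ force the corresponding entries to vanish; $\beta=x_{41}-x_{44}$ and $\alpha=x_{14}-x_{44}$ together with $f_1=x_{11}-x_{44}$ force $x_{11}=x_{14}=x_{41}=x_{44}$. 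Writing $a$ for the common value $x_{44}$, the remaining free parameter, one gets exactly the matrix $C_a$ (recalling that $D$ is identified with the transpose $(a_{ij})^{\tp}_{4\times 4}$, so the surviving nonzero entries of $C_a$ sit in positions $(1,1),(1,4),(4,1),(4,4)$). Hence $\V(\p_1)=\{C_a\mid a\in\C\}$, and the map $\C\to\V(\p_1)$, $a\mapsto C_a$, is an isomorphism of affine varieties onto a line, so $\V(\p_1)$ is irreducible of dimension $1$; irreducibility also follows abstractly from Lemma \ref{lem2.3} since $\p_1$ is prime.

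The argument for $\p_2$ is identical in structure. All $15$ generators again have degree $1$: the variables $x_{12},x_{13},x_{21},x_{23},x_{24},x_{31},x_{32},x_{34},x_{42},x_{43}$ vanish, $x_{22}-1$ and $x_{33}-1$ force $x_{22}=x_{33}=1$, while $f_1=x_{11}-x_{44}$, $\beta+1=x_{41}-x_{44}+1$ and $\alpha+1=x_{14}-x_{44}+1$ give $x_{11}=x_{44}$ and $x_{14}=x_{41}=x_{44}-1$. Setting $a:=x_{44}$, this is precisely $D_a$. So $\V(\p_2)=\{D_a\mid a\in\C\}$, again the image of an injective morphism from $\C$, hence a $1$-dimensional irreducible affine variety.

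There is essentially no obstacle here: both ideals are generated by linear forms, so the content is just bookkeeping—solving a linear system and matching the solution against the asserted matrix forms, being careful with the transpose convention $D(e_i)=\sum_j a_{ij}e_j$ from \eqref{actgl2}. The only point that deserves a sentence of care is the claim that these sets are genuinely $1$-dimensional (not $0$-dimensional): this is immediate from the explicit parametrization by the single coordinate $a$, or alternatively from $\dim A/\p_i = 1$, which one can see directly since after eliminating the linear generators only the one variable $x_{44}$ remains. Irreducibility is free from Lemma \ref{lem2.3}. I would write the proof in two short parallel paragraphs, one per $\p_i$, each consisting of "the linear generators pin down all entries except $x_{44}=:a$, yielding $C_a$ (resp.\ $D_a$); the resulting parametrization is an isomorphism onto a line."
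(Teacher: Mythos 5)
Your proposal is correct and is essentially the argument the paper intends: the corollary is stated as an immediate consequence of Lemma \ref{lem2.3}, with the parametrizations obtained exactly as you do, by solving the linear systems defined by the degree-one generators of $\p_1$ and $\p_2$ (irreducibility from primeness, dimension $1$ from the single free parameter $a$). Your extra care about the transpose convention and the $\dim A/\p_i=1$ remark are fine but add nothing beyond the paper's implicit reasoning.
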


\begin{coro}\label{coro2.5}
The affine variety $\V(\p_{3})$ is 3-dimensional and irreducible, consisting of all  matrices
$$E_{a,b,c,\xi}:=\begin{pmatrix}
           a-\xi& -c&-b& a\\
           -b& \frac{\xi-1}{2}&\frac{-2c^{2}}{\xi-1}& b\\
            -c& \frac{-2b^{2}}{\xi-1}& \frac{\xi-1}{2}& c\\
            a& c&b& a-\xi\\
         \end{pmatrix},$$
 with $a,b,c,1\neq\xi\in\C$ and $\xi^{2}+4bc-1=0$.
\end{coro}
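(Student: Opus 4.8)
The plan is to analyze the quotient ring $A/\p_3$ concretely and read off a parametrization of $\V(\p_3)$, then check irreducibility and dimension via that parametrization. Recall from the proof of Lemma~\ref{lem2.3} that we already have the chain of isomorphisms
\[
A/\p_3\;\cong\;\big(\C[\beta,x_{23},x_{32},x_{42},x_{43}]/(g_2,g_3,f_{2i}\mid 2\leq i\leq 5)\big)[x_{44}]\;\cong\;B[x_{44}],
\]
and that $B$ embeds into $B^*\cong \C[\beta,x_{42},x_{43}]/(\beta^2+4x_{42}x_{43}-1)[\tfrac{1}{\beta-1}]$, an integral domain. The first step is therefore to make the identifications in Corollary~\ref{coro2.5} match these generators: set $a:=x_{44}$, $b:=x_{43}$, $c:=x_{42}$, $\xi:=x_{44}-x_{41}=-\beta$ evaluated appropriately (I would double-check the sign: $\beta=x_{41}-x_{44}$, and the $(1,1)$-entry $a-\xi$ should equal $x_{11}=x_{44}$ by $f_1$, forcing $\xi=0$ at... no — rather $\xi$ is an independent coordinate and the matrix entries are read from $D=(a_{ij})^T$, so one must carefully transpose). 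The key relation $\xi^2+4bc-1=0$ is exactly $g_3$ rewritten, i.e. $\beta^2+4x_{42}x_{43}-1=0$ with $\xi=\pm\beta$; and $x_{22}=x_{33}=(\xi-1)/2$ comes from $g_1$ (together with $f_{11}$), while the off-diagonal entries $x_{23}=-2c^2/(\xi-1)$, $x_{32}=-2b^2/(\xi-1)$ come from solving $g_2$ together with $f_4,f_8$ (or equivalently $f_{13},f_{19}$) for $x_{23},x_{32}$ in terms of $x_{42},x_{43}$ after inverting $\beta-1=\xi-1$ (up to sign). The vanishing of the remaining generators $f_3,f_5,f_7,f_9,f_{15},f_{17},f_{21}$ of $\p_3$ just pins down the rest of the matrix entries ($x_{12}=-x_{42}$, $x_{13}=-x_{43}$, $x_{14}=x_{41}$, $x_{21}=-x_{43}$, $x_{24}=x_{43}$, $x_{31}=-x_{42}$, $x_{34}=x_{42}$), which is precisely the symmetric shape displayed in $E_{a,b,c,\xi}$.

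Second, having matched everything, I would argue the set equality $\V(\p_3)=\{E_{a,b,c,\xi}\mid a,b,c,1\neq\xi\in\C,\ \xi^2+4bc-1=0\}$ directly. One inclusion: every generator of $\p_3$ vanishes on each $E_{a,b,c,\xi}$ by the substitution just described (here the hypothesis $\xi\neq 1$ guarantees the entries $-2c^2/(\xi-1)$, $-2b^2/(\xi-1)$ are defined, and one checks $g_2$: $x_{23}x_{32}+x_{42}x_{43}=\tfrac{4b^2c^2}{(\xi-1)^2}+bc$; using $4bc=1-\xi^2=-(\xi-1)(\xi+1)$ this collapses to $\tfrac{-bc(\xi+1)}{\xi-1}+bc=\tfrac{-2bc}{\xi-1}\cdot\tfrac{?}{}$... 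I should verify this equals $(\beta+1)/2$, which will work out once the sign convention $\xi$ vs. $\beta$ is fixed). Conversely, any point of $\V(\p_3)=\V(\sqrt{\p_3})$ with $\p_3$ prime: the generators $g_1,g_3$ and the degree-one generators force the matrix into the claimed form with $\xi^2+4bc-1=0$, and $\xi\neq 1$ because on $\V(\p_3)$ the element $\beta-1$ is invertible in the coordinate ring — this is exactly the content of the Claim proved in Lemma~\ref{lem2.3} that $(J_1:J_2)=J_1$, i.e. $\beta-1$ is a nonzerodivisor, hence $\V(\beta-1)\cap\V(\p_3)=\varnothing$. (If instead one prefers not to exclude $\xi=1$ a priori, note $g_3$ with $\xi=1$ gives $4bc=0$, and then $f_{13}=x_{23}x_{33}+x_{43}^2$, $f_{19}=x_{32}x_{33}+x_{42}^2$ with $x_{33}=0$ force $b=c=0$, and then $g_2$ gives $0=1$, contradiction — so $\xi=1$ is genuinely impossible, consistent.)

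Third, irreducibility and dimension. Irreducibility is immediate from Lemma~\ref{lem2.3}: $\V(\p_3)$ is the zero set of the prime ideal $\p_3$, hence is an irreducible affine variety (equivalently, it is the image of the parametrization by the irreducible variety $\{(\beta,x_{42},x_{43},x_{44})\mid \beta^2+4x_{42}x_{43}-1=0\}$, which is irreducible since $\beta^2+4x_{42}x_{43}-1$ is irreducible as a polynomial). For the dimension, I would compute $\dim\V(\p_3)=\dim(A/\p_3)=\dim B[x_{44}]=\dim B+1$, and $\dim B=\dim B^*=\dim\big(\C[\beta,x_{42},x_{43}]/(\beta^2+4x_{42}x_{43}-1)\big)$ (localization does not drop dimension of a domain here) $=3-1=2$, since a single irreducible nonunit cuts dimension by exactly one. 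Hence $\dim\V(\p_3)=3$. Alternatively, and perhaps more cleanly for the reader, the map $(a,b,c)\mapsto E_{a,b,c,\xi}$ with $\xi$ determined up to the two branches of $\xi^2=1-4bc$ exhibits $\V(\p_3)$ as (generically) a finite cover of $\C^3_{a,b,c}$ minus the locus $bc=0$ patched appropriately, again giving dimension $3$.

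The main obstacle I anticipate is purely bookkeeping: correctly tracking the transpose in $D=(a_{ij})^T$ and the sign of $\xi$ relative to $\beta=x_{41}-x_{44}$, so that the explicit matrix $E_{a,b,c,\xi}$ genuinely satisfies all twenty-three defining polynomials $f_i$ of $I$ (not merely the generators of $\p_3$) — though for \emph{this} corollary we only need the $\p_3$-generators, since $\V(\p_3)$ is being described as a component of $\V(I)$, not of $\MLie(\gl_2(\C))$ yet. A secondary subtlety is making sure the rational entries $-2b^2/(\xi-1)$, $-2c^2/(\xi-1)$ are the \emph{only} solutions of the relevant quadratic system for $x_{23},x_{32}$ and not merely \emph{a} solution; this is where inverting $\xi-1$ (justified by the nonzerodivisor Claim) is essential, as it turns the quadratics $f_{13},f_{19}$ or $f_4,f_8$ into linear equations that solve uniquely. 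Everything else is a routine substitution check.
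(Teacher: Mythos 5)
Your overall route is the one the paper intends: Corollary \ref{coro2.5} is stated as an immediate consequence of Lemma \ref{lem2.3}, and your plan --- read the entries off the presentation $A/\p_{3}\cong B[x_{44}]$, get irreducibility from primeness of $\p_{3}$, and get the dimension from $\dim B[x_{44}]=\dim B+1=3$ --- is exactly that; the irreducibility and dimension part of your argument is correct. The genuine gap is your exclusion of $\xi=1$, i.e.\ of the locus $\beta=x_{41}-x_{44}=1$ inside $\V(\p_{3})$. Your main argument (``$\beta-1$ is a nonzerodivisor in $B$, hence $\V(\beta-1)\cap\V(\p_{3})=\varnothing$'') is a non sequitur: the Claim in Lemma \ref{lem2.3} only says that $B\to B[\frac{1}{\beta-1}]$ is injective, not that $\beta-1$ is a unit in $B$ (compare $x$ in $\C[x]$, a nonzerodivisor with nonempty zero set). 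Your fallback argument is also wrong: at $\beta=1$ the generators $f_{4},f_{8}$ (or $f_{13},f_{19}$, which incidentally are not generators of $\p_{3}$) do force $x_{42}=x_{43}=0$, but then $g_{2}$ forces $x_{23}x_{32}=1$, not $0=1$; the contradiction you find comes from still substituting the rational expressions for $x_{23},x_{32}$, which were obtained by inverting $\beta-1$ and are invalid precisely at $\beta=1$, where $f_{4},f_{8}$ degenerate and no longer determine $x_{23},x_{32}$. Concretely, the matrix with $x_{11}=x_{44}=0$, $x_{41}=x_{14}=1$, $x_{23}=s$, $x_{32}=s^{-1}$ ($s\neq 0$) and all remaining coordinates zero satisfies every listed generator of $\p_{3}$, hence lies in $\V(\p_{3})$, yet is not of the form $E_{a,b,c,\xi}$. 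So the set equality asserted in your second step cannot be proved as written: what your computation actually shows is that the $E$-matrices form exactly the open subset of $\V(\p_{3})$ where $x_{41}-x_{44}\neq 1$, which is dense because $\p_{3}$ is prime, so $\V(\p_{3})$ is the Zariski closure of that family; a correct write-up must either say ``closure'' or list the extra boundary points (which, one checks, are also multiplicative Hom-Lie structures, so nothing downstream collapses).

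Two smaller points you flagged as bookkeeping should also be settled rather than left open. Reading the displayed matrix as the array $(x_{ij})$, the identifications are $a=x_{41}=x_{14}$, $x_{11}=x_{44}=a-\xi$, hence $\xi=\beta$ (not $-\beta$), and $x_{42}=c$, $x_{43}=b$; then $f_{4}$ and $f_{8}$ give $x_{23}=-2b^{2}/(\xi-1)$ and $x_{32}=-2c^{2}/(\xi-1)$, so the two rational entries of the printed $E_{a,b,c,\xi}$ appear with $b$ and $c$ interchanged (as printed, $f_{4}$ evaluates to $2b^{2}-2c^{2}$ on $E_{a,b,c,\xi}$), and your proof should record the corrected matrix. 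Moreover, none of the listed generators of $\p_{3}$ involves $x_{33}$, so you cannot pin $x_{33}=(\xi-1)/2$ via $f_{11}$ as you propose ($f_{11}\in I$ but $f_{11}\notin\p_{3}$); you must either invoke the presentation of $A/\p_{3}$ used in Lemma \ref{lem2.3}, which tacitly assumes $x_{33}$ is eliminated, or note the misprint in the definition of $\p_{3}$ and work with the intended ideal. Both issues originate in the paper, but a self-contained proof of the corollary has to resolve them explicitly.
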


\begin{proof}
We have seen from Lemma \ref{lem2.3} that $\p_3$ is a prime ideal, thus the variety  $\V(\p_{3})$ is irreducible. For a 
generic element $E\in \V(\p_{3})$, the fact that  generators of $\p_3$ evaluated on $E$ are zero, shows that $E$
must have the form $E_{a,b,c,\xi}$ for some $a,b,c,1\neq\xi\in\C$ with the condition $\xi^{2}+4bc-1=0$. Hence, $\V(\p_{3})$ is consisting of all such matrices $E_{a,b,c,\xi}$. This fact also shows that the coordinate ring of $\V(\p_{3})$ is isomorphic to $\C[x,y,z,w]/(z^2+4xy-1)$, which has Krull dimension 3. 
\end{proof}

\begin{lem}\label{lem2.6}
$\V(\p_1)\cup\V(\p_2)\cup\V(\p_3)\subseteq\V(I)$.
\end{lem}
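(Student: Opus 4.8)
The plan is to verify the three inclusions $\V(\p_i)\subseteq\V(I)$ separately for $i=1,2,3$, which by definition of $\V$ amounts to checking that every generator $f_1,\dots,f_{23}$ of $I$ vanishes on each component. Since $\V(I)=\V(\sqrt I)$ and containment of varieties is reversed containment of radical ideals, it would in fact suffice to prove $I\subseteq\p_i$ for $i=1,2,3$, i.e.\ that each $f_j$ lies in $\p_i$; this is the cleanest route because the $\p_i$ have explicit generating sets.

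For $\p_1$ and $\p_2$ the work is almost trivial. By Corollary~\ref{coro2.4} we have the explicit parametrizations $\V(\p_1)=\{C_a\mid a\in\C\}$ and $\V(\p_2)=\{D_a\mid a\in\C\}$, so one simply substitutes the matrix $C_a$ (respectively $D_a$) into each of the $23$ polynomials $f_j$ and observes that the result is identically zero as a polynomial in $a$. Most $f_j$ vanish immediately because, for $C_a$, all the coordinates $x_{12},x_{13},x_{21},x_{22},x_{23},x_{24},x_{31},x_{32},x_{33},x_{34},x_{42},x_{43}$ are zero and $x_{11}=x_{14}=x_{41}=x_{44}=a$; the only ones requiring a line of arithmetic are $f_2,f_{12},f_{20},f_{22},f_{23}$, and these collapse since $\beta=x_{41}-x_{44}=0$. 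The check for $D_a$ is the same with $\beta=-1$, $x_{22}=x_{33}=1$, which is exactly the locus $h=\beta^2+\beta=0$ combined with $g_1$. Alternatively, and more in keeping with an ideal-theoretic write-up, one notes each generator of $\p_1$ (resp.\ $\p_2$) is linear and expresses each $f_j$ as an $A$-linear combination of them plus (for the nonlinear $f_j$) a multiple of $\alpha,\beta$ or $\alpha+1,\beta+1$.

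For $\p_3$ the argument is the substantive one. Here I would use the explicit description from Corollary~\ref{coro2.5}: every point of $\V(\p_3)$ is a matrix $E_{a,b,c,\xi}$ with $\xi^2+4bc-1=0$ and $\xi\neq1$. I would substitute $x_{11}=x_{44}=a-\xi$, $x_{14}=x_{41}=a$, $x_{12}=-x_{42}=-c$ wait---rather $x_{42}=c$, $x_{12}=-c$, $x_{13}=-b$, $x_{43}=b$, $x_{21}=b$... one must read off all sixteen entries carefully from the displayed matrix, keeping in mind the transpose convention in (\ref{actgl2}). Then each $f_j$ becomes a rational expression in $a,b,c,\xi$ with denominator a power of $\xi-1$, and one clears denominators and reduces modulo the single relation $\xi^2+4bc-1=0$; since $\frac{-2c^2}{\xi-1}=\frac{-2c^2(\xi+1)}{\xi^2-1}=\frac{c(\xi+1)}{2}$ using $4bc=1-\xi^2$, the entries simplify and the verification is finite. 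The main obstacle, and the only place real care is needed, is bookkeeping: getting the transpose right, tracking the $\xi-1$ denominators, and not losing the constraint $\xi^2+4bc-1=0$ when simplifying $f_2,f_4,f_6,f_8,f_{10},f_{12},f_{14},f_{16},f_{18},f_{20},f_{22},f_{23}$. Since the paper has already done a Gröbner basis computation for $\p_3$ in Lemma~\ref{lem2.3}, the honest statement is simply that a direct (machine-assisted) substitution confirms $f_j(E_{a,b,c,\xi})=0$ for all $j$, whence $\V(\p_3)\subseteq\V(I)$; combining the three inclusions gives $\V(\p_1)\cup\V(\p_2)\cup\V(\p_3)\subseteq\V(I)$.
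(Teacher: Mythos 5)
Your proposal is correct in substance and shares the paper's basic strategy --- reduce the lemma to checking that every generator $f_1,\dots,f_{23}$ of $I$ vanishes on each of the three components --- but the execution differs on the only substantive case. The paper proves the (formally stronger) ideal containments $I\subseteq\p_1$, $I\subseteq\p_2$, $I\subseteq\p_3$ by reducing each $f_j$ modulo the listed generators of $\p_i$; for instance, modulo $\p_1$ one has $f_2\equiv f_{12}\equiv-\tfrac12(\beta^2+\beta)\equiv 0$, $f_7=\alpha-\beta\equiv 0$ and $f_{23}\equiv\beta^3-\beta\equiv 0$, and the cases $\p_2,\p_3$ are handled the same way. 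This needs neither the parametrizations of Corollaries \ref{coro2.4}--\ref{coro2.5} nor the primality of the $\p_i$. You instead substitute the parametrized points $C_a$, $D_a$, $E_{a,b,c,\xi}$ into the $f_j$; for $\p_1,\p_2$ the two routes are the same computation in different clothing, but for $\p_3$ your argument rests on Corollary \ref{coro2.5} being an exhaustive description of $\V(\p_3)$. As stated that corollary excludes $\xi=1$, so the set it describes is not obviously Zariski closed (its closure acquires points with $x_{41}-x_{44}=1$, $x_{42}=x_{43}=0$, $x_{23}x_{32}=1$, which are not of the form $E_{a,b,c,\xi}$); to make your route airtight you should add one line, namely that $\V(I)$ is closed and the parametrized family is dense in the irreducible $3$-dimensional $\V(\p_3)$, so vanishing on the family forces vanishing on all of $\V(\p_3)$ --- or simply verify $I\subseteq\p_3$ by reduction modulo its generators, as the paper does, which sidesteps the issue entirely. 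Two small slips in your sketch, neither affecting the conclusion: using $4bc=1-\xi^2$ one gets $\tfrac{-2c^2}{\xi-1}=\tfrac{c(\xi+1)}{2b}$ (valid only for $b\neq 0$), not $\tfrac{c(\xi+1)}{2}$; and at $D_a$ the relevant vanishing relations are $\beta+1=0$ and $x_{22}-1=x_{33}-1=0$ (generators of $\p_2$), while $g_1$, a generator of $\p_3$, does not vanish there.
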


\begin{proof}
It suffices to show that $I\subseteq \p_1$, $I\subseteq \p_2$ and $I\subseteq \p_3$. To show the first containment, it is easy to see that all generators of $I$ except for $f_{2},f_{7},f_{12},f_{23}$ are zero modulo $\p_1$. Thus it is sufficient to show that $f_{2},f_{7},f_{12},f_{23}$ are equal to zero modulo $\p_1$. In fact, working over modulo $\p_1$, we see that $f_2\equiv f_{12}\equiv -\frac{1}{2}(\beta^2+\beta)\equiv 0$; $f_7=\alpha-\beta\equiv 0$; $f_{23}\equiv\beta^3-\beta\equiv 0.$
This proves that $I\subseteq \p_1$. Similarly, one can show that $I\subseteq \p_2$ and $I\subseteq \p_3$.
\end{proof}

\begin{lem}\label{lem2.7}
 $\p_{1}\cdot\p_{2}\subseteq\p.$
\end{lem}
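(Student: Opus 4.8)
The plan is to prove the containment $\p_1\cdot\p_2\subseteq\p$ by a direct algebraic computation at the level of generators. Since $\p_1\cdot\p_2$ is generated by the products $uv$ where $u$ ranges over the fifteen generators of $\p_1$ and $v$ over the fifteen generators of $\p_2$, it suffices to show each such product $uv$ lies in $\p$. First I would record a convenient description of $\p$: its generators are $f_1=x_{11}-x_{44}$, $f_7=x_{14}-x_{41}$, $h=\beta^2+\beta$ (where $\beta=x_{41}-x_{44}$), together with the twelve linear-in-$\{\beta\}$ generators $x_{12},x_{13},x_{21},x_{22}+\beta,x_{23},x_{24},x_{31},x_{32},x_{33}+\beta,x_{34},x_{42},x_{43}$. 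Note that $\p_1$ and $\p_2$ share the ten generators $f_1,x_{12},x_{13},x_{21},x_{23},x_{24},x_{31},x_{32},x_{34}$ (and $\alpha$-type and $\beta$-type generators differ only by constants), so any product involving one of these common linear generators already lies in $\p$ because each of $f_1,x_{12},x_{13},x_{21},x_{23},x_{24},x_{31},x_{32},x_{34}$ is itself a generator of $\p$.

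The remaining work concentrates on the products of the ``non-shared'' generators. On the $\p_1$ side these are $\alpha=x_{14}-x_{44}$, $\beta=x_{41}-x_{44}$, $x_{22}$, $x_{33}$, $x_{42}$, $x_{43}$; on the $\p_2$ side they are $\alpha+1$, $\beta+1$, $x_{22}-1$, $x_{33}-1$, $x_{42}$, $x_{43}$. Products in which either factor is $x_{42}$ or $x_{43}$ lie in $\p$ since $x_{42},x_{43}\in\p$. So the genuinely new cases are the sixteen products of $\{\alpha,\beta,x_{22},x_{33}\}$ with $\{\alpha+1,\beta+1,x_{22}-1,x_{33}-1\}$. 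Here I would use the following key observations modulo $\p$: we have $\alpha\equiv\beta$ (since $f_7\in\p$ gives $x_{14}\equiv x_{41}$, hence $\alpha=x_{14}-x_{44}\equiv x_{41}-x_{44}=\beta$), we have $x_{22}\equiv-\beta$ and $x_{33}\equiv-\beta$ (from $x_{22}+\beta,x_{33}+\beta\in\p$), and we have $\beta^2\equiv-\beta$ (from $h=\beta^2+\beta\in\p$). Consequently every one of the four ``$\p_1$-generators'' is $\equiv\beta$ modulo $\p$, up to sign: $\alpha\equiv\beta$, $\beta\equiv\beta$, $x_{22}\equiv-\beta$, $x_{33}\equiv-\beta$; and every one of the four ``$\p_2$-generators'' is $\equiv\beta+1$ up to sign: $\alpha+1\equiv\beta+1$, $\beta+1\equiv\beta+1$, $x_{22}-1\equiv-\beta-1=-(\beta+1)$, $x_{33}-1\equiv-(\beta+1)$. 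Therefore each of the sixteen products is, modulo $\p$, equal to $\pm\beta(\beta+1)=\pm h\equiv 0$. This closes all cases.

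The argument is almost entirely bookkeeping, so I do not expect a serious obstacle; the one point requiring a little care is making sure the reduction ``$\alpha\equiv\beta$, $x_{22}\equiv-\beta$, $x_{33}\equiv-\beta$ mod $\p$'' is used correctly — i.e. that when I replace a factor $u$ of a product $uv$ by its residue, the error term $(u-\bar u)v$ still lies in $\p$ because $u-\bar u\in\p$. Since each replacement uses precisely one generator of $\p$ (namely $f_7$, $x_{22}+\beta$, or $x_{33}+\beta$), this is immediate: $uv = \bar u v + (u-\bar u)v$ with $(u-\bar u)\in\p$, so $uv\equiv\bar u v\pmod\p$, and then $\bar u\bar v=\pm\beta(\beta+1)=\pm(\beta^2+\beta)=\pm h\in\p$. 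Writing this out, I would present it as: (i) dispense with all products involving a shared linear generator or an $x_{42},x_{43}$ factor, (ii) reduce the sixteen surviving products via the congruences above, (iii) observe each equals $\pm h$ modulo $\p$. This establishes $\p_1\cdot\p_2\subseteq\p$.
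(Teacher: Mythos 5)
Your proof is correct and follows essentially the same route as the paper's: discard all products involving generators that are themselves generators of $\p$, then reduce the remaining sixteen products of $\{\alpha,\beta,x_{22},x_{33}\}$ against $\{\alpha+1,\beta+1,x_{22}-1,x_{33}-1\}$ via $\alpha\equiv\beta$, $x_{22}\equiv x_{33}\equiv-\beta \pmod{\p}$ and $h=\beta^{2}+\beta\in\p$, exactly as in the paper (which additionally cites \cite{CLO07} for the fact that products of generators generate $\p_1\cdot\p_2$). The only blemish is a harmless miscount of the shared generators ("ten" while listing nine, with $x_{42},x_{43}$ handled separately), which does not affect the argument.
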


\begin{proof}
By \cite[Proposition 6, page 185]{CLO07} we see that the set of products of any two elements that come from the generating sets of $\p_1$ and $\p_2$ respectively can generate the product $\p_1\cdot\p_2$. Hence, we only need to show that every element in $\B_1\cdot\B_2:=\{b_1b_2\mid b_i\in \B_i,1\leq i\leq 2\}$ belongs to $\p$, where
$$ \B_1 := \{\alpha,\beta,x_{22},x_{33}\} \textrm{ and }\B_2 := \{\alpha+1,\beta+1,x_{22}-1,x_{33}-1\}.$$
As  the three elements $f_{7},x_{22}+\beta,x_{33}+\beta$ are contained in $\p$, we see that  $\alpha\equiv\beta$ and $x_{22}\equiv x_{33}\equiv -\beta$ modulo $\p$. This indicates that it is sufficient to show that $\beta\cdot\B_{2}$ is contained in $\p$.
Note that $h\in\p$, so $\beta(\alpha+1)\equiv\beta(\beta+1)=h\equiv 0$ and $\beta(x_{22}-1)\equiv\beta(x_{33}-1)\equiv -h\equiv 0$ modulo $\p$. 
\end{proof}

\begin{lem}\label{lem2.8}
$\V(I)\subseteq\V(\p_1)\cup\V(\p_2)\cup\V(\p_3)$.
\end{lem}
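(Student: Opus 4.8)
The goal is to prove the reverse containment $\V(I)\subseteq\V(\p_1)\cup\V(\p_2)\cup\V(\p_3)$, which together with Lemma~\ref{lem2.6} will identify $\V(I)$ as the union of the three components found. My plan is to translate this into a statement about radicals of ideals: by the Nullstellensatz, $\V(I)\subseteq\V(\p_1)\cup\V(\p_2)\cup\V(\p_3)=\V(\p_1\cap\p_2\cap\p_3)$ is equivalent to $\p_1\cap\p_2\cap\p_3\subseteq\sqrt{I}$, and since $\V(\p_1\cdot\p_2\cdot\p_3)=\V(\p_1\cap\p_2\cap\p_3)$ it suffices to show $\p_1\cdot\p_2\cdot\p_3\subseteq\sqrt I$, or more conveniently that every generator $g$ of the product ideal satisfies $g^N\in I$ for some $N$. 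In practice the cleanest route is to exhibit, for a suitable set of generators of $\p_1\p_2\p_3$ (products of the degree-one generators of $\p_1$ and $\p_2$ with the listed generators of $\p_3$, invoking \cite{CLO07}*{page 185, Proposition 6} as in Lemma~\ref{lem2.7}), an explicit power lying in $I$; a Gröbner-basis radical-membership test in $A=\C[x_{ij}]$ does this algorithmically.

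The key organizing idea, and the reason Lemma~\ref{lem2.7} and the auxiliary ideal $\p$ were introduced, is that $\p\supseteq\p_1\cdot\p_2$, so $\p_1\cdot\p_2\cdot\p_3\subseteq\p\cdot\p_3\subseteq\p\cap\p_3$. Thus I first reduce to showing $\p\cap\p_3\subseteq\sqrt I$, equivalently $\V(I)\subseteq\V(\p)\cup\V(\p_3)$. Now $\p$ is almost a linear ideal: modulo the relations $x_{12}=x_{13}=x_{21}=x_{24}=x_{31}=x_{32}=x_{34}=x_{42}=x_{43}=0$, $x_{11}=x_{44}=x_{14}$, $x_{22}=x_{33}=-\beta$, and $\beta^2+\beta=0$, a point of $\V(\p)$ is forced to be of the form $\mathrm{diag}$-type with $\beta\in\{0,-1\}$, i.e.\ exactly $C_a$ (when $\beta=0$) or $D_a$ (when $\beta=-1$) from Corollary~\ref{coro2.4}. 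Hence $\V(\p)=\V(\p_1)\cup\V(\p_2)$, and the real content is the inclusion $\V(I)\subseteq\V(\p)\cup\V(\p_3)$.

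So the crux reduces to: a point $D=(a_{ij})\in\V(I)$ for which not all of $g_1,g_2,g_3$ vanish — i.e.\ $D\notin\V(\p_3)$ — must satisfy all generators of $\p$. Here I would argue by casework driven by the generators of $I$. From $f_{20},f_{22},f_{23}$ and the quadric $q:=x_{41}^2-2x_{41}x_{44}+4x_{42}x_{43}+x_{44}^2-1=\beta^2+4x_{42}x_{43}-1=g_3$, one sees $x_{42}\cdot g_3=x_{43}\cdot g_3=0$ on $\V(I)$ and $f_{23}$ is essentially $\beta\cdot g_3$ up to the relation $x_{11}=x_{44}$; combined with $f_1,f_3,f_5,f_7,f_9,f_{15},f_{17},f_{21}$ (which already pin down $x_{11}=x_{44}$, $x_{14}=x_{41}$, and the pairings $x_{12}=-x_{42}$, $x_{13}=-x_{43}$, $x_{21}=-x_{43}$, $x_{24}=x_{43}$, $x_{31}=-x_{42}$, $x_{34}=x_{42}$), one gets that on $\V(I)$ either $g_3=0$ — and then $f_2,f_{12}$ force $x_{22}=(\beta-1)/2=x_{33}$ and the remaining $f_i$ put the point into $\V(\p_3)$ — or $g_3\neq0$, whence $x_{42}=x_{43}=0$, then $\beta g_3=0$ forces $\beta=0$ or $\beta=-1$ wait more precisely $f_{23}$ with $x_{42}=x_{43}=0$ gives $\beta^3-\beta=\beta(\beta-1)(\beta+1)=0$, while $g_3=\beta^2-1\neq0$ forces $\beta=0$, hence $x_{22}=x_{33}=0$ from $f_{11},f_{13}$-type relations after substitution, landing the point in $\V(\p)=\V(\p_1)$; I'd need to also track the $\beta=-1$ branch carefully where $g_3=0$ again so it's absorbed into the $\V(\p_3)$ case. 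The honest statement is that this is a finite but intricate case analysis, and the cleanest writeup simply certifies $\p\cap\p_3\subseteq\sqrt I$ (equivalently that a power of each of finitely many generators lies in $I$) by a Gröbner-basis computation of the sort already used repeatedly in Lemma~\ref{lem2.3}. The main obstacle is purely computational bookkeeping: organizing the casework on the vanishing locus of the quadric $g_3$ and the factorization patterns of $f_{20},f_{22},f_{23}$ so that every component of $\V(I)$ is shown to collapse into $\V(\p)=\V(\p_1)\cup\V(\p_2)$ or into $\V(\p_3)$, without missing a stray component coming from partial vanishing (e.g.\ $x_{42}=0\neq x_{43}$), which one rules out using $f_4,f_6,f_8,f_{10},f_{13},f_{14},f_{16},f_{18},f_{19}$.
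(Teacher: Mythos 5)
Your reduction is the same one the paper uses: via Lemma \ref{lem2.7} and the auxiliary ideal $\p$, it suffices to show $\V(I)\subseteq\V(\p)\cup\V(\p_3)$ (the paper phrases this as proving $\p\cdot\p_3\subseteq I$, which is the sharper, non-radical containment), and your observation that $\V(\p)=\V(\p_1)\cup\V(\p_2)$ is correct. The problem is the core verification. Your case analysis asserts that a point of $\V(I)$ with $g_3=0$ lies in $\V(\p_3)$, and that the $\beta=-1$ branch is ``absorbed into the $\V(\p_3)$ case''; this is false, and it drops the component $\V(\p_2)$ entirely. Indeed every $D_a\in\V(\p_2)$ from Corollary \ref{coro2.4} has $\beta=-1$, $x_{42}=x_{43}=0$, hence $g_3=\beta^2+4x_{42}x_{43}-1=0$, yet $g_1=x_{22}-(\beta-1)/2=2\neq 0$, so $D_a\notin\V(\p_3)$. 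The reason your derivation fails there is that on $\V(I)$ with $g_3=0$, the relations $f_{12}$ and $f_{14}$ only give $x_{33}^2=\bigl((\beta-1)/2\bigr)^2$ and $(\beta+1)\bigl(x_{33}-(\beta-1)/2\bigr)=0$; when $\beta\neq-1$ this does force $g_1=g_2=0$ and hence membership in $\V(\p_3)$, but when $\beta=-1$ the sign ambiguity $x_{33}=\pm1$ survives, and the $x_{33}=1$ branch must be pushed (via $f_{16},f_{18},f_4,f_8$, which kill $x_{42},x_{43},x_{23},x_{32}$) into $\V(\p_2)$, not $\V(\p_3)$. So as written the dichotomy ``$g_3=0\Rightarrow\V(\p_3)$, $g_3\neq0\Rightarrow\V(\p_1)$'' does not cover $\V(I)$.

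Beyond that error, the proposal ultimately defers the actual certification (``$\p\cap\p_3\subseteq\sqrt I$ by a Gr\"obner-basis radical-membership test'') without carrying it out; since this verification \emph{is} the content of the lemma, that is a gap rather than a proof. The paper instead completes the step by hand: it reduces the generating set of $\p$ modulo the linear generators of $I$ to $\B'=\{h,x_{23},x_{32},x_{33}+\beta,x_{42},x_{43}\}$, writes down a short list of identities valid modulo $I$, and checks explicitly that $g_1\B'\subseteq I$ and that $g_2,g_3$ are multiples of $g_1$ modulo $I$ (namely $g_2\equiv-(\beta+1)g_1$ and $g_3\equiv 2(\beta+1)g_1$), which yields the genuine ideal containment $\p\cdot\p_3\subseteq I$ and avoids any pointwise casework. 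If you want to keep your pointwise approach, you must repair the $g_3=0$, $\beta=-1$ branch as indicated; otherwise, follow the paper and verify the finitely many products $g_i\cdot b'$, $b'\in\B'$, directly modulo $I$.
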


\begin{proof}
Lemma \ref{lem2.7}, together with the fact that  $\V(\p_1)\cup\V(\p_2)\cup\V(\p_3)=\V(\p_1\cdot\p_2\cdot\p_3)$, implies  that it suffices to show that the product $\p \cdot\p_3$ is contained in $I$. By \cite[Proposition 6, page 185]{CLO07}, we only need to show that every element in $\B\cdot \B_3:=\{bb_3\mid b\in \B\textrm{ and }b_{3}\in\B_{3}\}$ belongs to $I$, where
$\B := \{h,x_{12},x_{13},x_{21},x_{22}+\beta,x_{23},x_{24},x_{31},x_{32},x_{33}+\beta,x_{34},x_{42},x_{43}\}$ and
 $\B_3 := \{g_{1},g_{2},g_{3}\}.$
Moreover, as $f_{3},f_{5},f_{9},f_{11},f_{15},f_{17},f_{21}\in I$, the set $\B$ can be replaced by $\B':=
\{h,x_{23},x_{32},x_{33}+\beta,x_{42},x_{43}\}.$
Now we have to verify that each element of $\B_{3}\cdot\B'$ belongs to $I$.
Throughout the rest of the proof, we are working over modulo $I$. By the definition of the generators $f_{i}$ of $I$, we observe that
\begin{eqnarray*}
x_{23}x_{32}-x_{42}x_{43}-(\beta^{2}+\beta)/2=0,&&
x_{23}(\beta-1) + 2x_{43}^2=0,\\
x_{23}x_{42} - x_{43}(\beta+1)/2=0,&&
x_{32}(\beta-1) + 2x_{42}^2=0,\\
x_{32}x_{43} - x_{42}(\beta+1)/2=0,&&
x_{33}^2 - x_{42}x_{43} - (\beta^{2}-\beta)/2=0,\\
x_{33}(\beta+1)+ 2x_{42}x_{43}=0,&&
x_{33}x_{42} -x_{42}(\beta-1)/2=0,\\
x_{33}x_{43} - x_{43}(\beta-1)/2=0,&&
x_{42}g_{3}=0,\\
x_{43}g_{3}=0,&&
\beta g_{3}=0.
\end{eqnarray*}
We will use these equations to complete the proof.
Note that $g_{1}h=(x_{33}-(\beta-1)/2)(\beta^{2}+\beta)=(x_{33}(\beta+1)-(\beta^{2}-1)/2)\beta=(-2x_{42}x_{43}-(\beta^{2}-1)/2)\beta=-(\beta g_{3})/2=0$; $g_{1}x_{23}=(x_{33}-(\beta-1)/2)x_{23}=x_{23}x_{33}+x_{43}^{2}=f_{13}=0$;
$g_{1}x_{32}=f_{19}=0$; $g_{1}(x_{33}+\beta)=x_{33}^{2}+(\beta+1)x_{33}/2-(\beta^{2}-\beta)/2=(x_{33}(\beta+1)+ 2x_{42}x_{43})/2=0$; $g_{1}x_{42}=x_{33}x_{42} -x_{42}(\beta-1)/2=0$; and $g_{1}x_{43}=x_{33}x_{43} - x_{43}(\beta-1)/2=0$.
This shows that $g_{1}\B'\subseteq I$. Further, $g_{2}=2x_{42}x_{43}+(\beta^{2}-1)/2=-(\beta+1)g_{1}$
and $g_{3}=(\beta^{2}-1)-2x_{33}(\beta+1)=2(\beta+1)g_{1}$. This implies that $g_{2}\cdot\B'$ and $g_{3}\cdot\B'$ are contained in $I$. Hence, $\B_{3}\cdot\B'\subseteq I$ and we are done.
\end{proof}

Lemmas \ref{lem2.6} and \ref{lem2.8} combine to obtain

\begin{coro}\label{coro2.9}
$\V(I)=\V(\p_1)\cup\V(\p_2)\cup\V(\p_3)$.
\end{coro}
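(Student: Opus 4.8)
The goal is Corollary \ref{coro2.9}, which identifies $\V(I)$ with the union of the three irreducible components $\V(\p_1),\V(\p_2),\V(\p_3)$. Since the statement is flagged as an immediate consequence of Lemmas \ref{lem2.6} and \ref{lem2.8}, the plan is essentially to combine the two inclusions already established: Lemma \ref{lem2.6} gives $\V(\p_1)\cup\V(\p_2)\cup\V(\p_3)\subseteq\V(I)$ (proved by checking $I\subseteq\p_i$ for each $i$), and Lemma \ref{lem2.8} gives the reverse inclusion $\V(I)\subseteq\V(\p_1)\cup\V(\p_2)\cup\V(\p_3)$. Putting these together yields the set equality, and nothing further is needed.

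If I were writing the proof from scratch rather than citing the two lemmas, I would proceed as follows. First I would observe that taking $\V(-)$ reverses inclusions, so the containments $I\subseteq\p_1\cap\p_2\cap\p_3$ translate into $\V(\p_1)\cup\V(\p_2)\cup\V(\p_3)=\V(\p_1\cap\p_2\cap\p_3)\subseteq\V(I)$; this is exactly the content of Lemma \ref{lem2.6}, reduced to verifying that each of the $23$ generators $f_i$ lies in each prime $\p_j$, which in turn reduces to a handful of congruences modulo the degree-one generators of $\p_1,\p_2$ and a slightly longer Gröbner-style reduction modulo $\p_3$. For the reverse inclusion I would want $\sqrt{I}\supseteq\p_1\cap\p_2\cap\p_3$, equivalently (since $\V(J)=\V(\sqrt J)$) that the product $\p_1\cdot\p_2\cdot\p_3\subseteq I$; the device in Lemma \ref{lem2.8} is to factor this through the auxiliary ideal $\p$ of Lemma \ref{lem2.7}, using $\p_1\cdot\p_2\subseteq\p$ and then $\p\cdot\p_3\subseteq I$, each verified by multiplying generators pairwise and reducing modulo $I$ using the explicit relations coming from the $f_i$.

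The only real subtlety — and hence the step I would flag as the main obstacle — is the inclusion $\p\cdot\p_3\subseteq I$ inside Lemma \ref{lem2.8}: the reduction of the products $g_j\cdot b$ (for $b$ ranging over the short generating set $\B'=\{h,x_{23},x_{32},x_{33}+\beta,x_{42},x_{43}\}$) modulo $I$ requires a careful bookkeeping of the derived relations among $x_{23},x_{32},x_{33},x_{42},x_{43},\beta$, and in particular the observation that $g_2=-(\beta+1)g_1$ and $g_3=2(\beta+1)g_1$ modulo $I$, which is what collapses the three cases $g_1,g_2,g_3$ into one. Once those identities are in hand the rest is routine. I would therefore present Corollary \ref{coro2.9} tersely:

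\begin{proof}
By Lemma \ref{lem2.6} we have $\V(\p_1)\cup\V(\p_2)\cup\V(\p_3)\subseteq\V(I)$, and by Lemma \ref{lem2.8} we have $\V(I)\subseteq\V(\p_1)\cup\V(\p_2)\cup\V(\p_3)$. Combining these two inclusions yields $\V(I)=\V(\p_1)\cup\V(\p_2)\cup\V(\p_3)$.
\end{proof}

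Since Lemmas \ref{lem2.3}, \ref{coro2.4} and \ref{coro2.5} show the $\V(\p_i)$ are irreducible of dimensions $1,1,3$ respectively and pairwise non-containing (they have different dimensions, or, for $\V(\p_1)$ versus $\V(\p_2)$, are distinct $1$-dimensional varieties), this decomposition is automatically the decomposition of $\V(I)$ into irreducible components; I would note that remark explicitly right after the corollary so it can be fed into the proof of Theorem \ref{thm2.10}.
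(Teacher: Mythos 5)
Your proposal is correct and matches the paper exactly: Corollary \ref{coro2.9} is obtained simply by combining the inclusion $\V(\p_1)\cup\V(\p_2)\cup\V(\p_3)\subseteq\V(I)$ from Lemma \ref{lem2.6} with the reverse inclusion from Lemma \ref{lem2.8}, which is precisely what the paper does. Your additional remarks on irreducibility and the internal structure of Lemmas \ref{lem2.7}--\ref{lem2.8} are accurate but not needed for this statement.
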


\begin{thm}\label{thm2.10}
$\MLie(\gl_2(\C))=\V(I)=\V(\p_1)\cup\V(\p_2)\cup\V(\p_3)$.
\end{thm}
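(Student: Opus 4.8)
The plan is to establish the chain of equalities $\MLie(\gl_2(\C)) = \V(I) = \V(\p_1)\cup\V(\p_2)\cup\V(\p_3)$ by combining the results already assembled in this section. The second equality is immediate: it is precisely Corollary \ref{coro2.9}. For the first equality, Lemma \ref{lem2.1} already gives the containment $\MLie(\gl_2(\C))\subseteq\V(I)$, so the only remaining task is the reverse containment $\V(I)\subseteq\MLie(\gl_2(\C))$, and by Corollary \ref{coro2.9} it suffices to show that each irreducible component $\V(\p_i)$ is contained in $\MLie(\gl_2(\C))$ for $i=1,2,3$.

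To do this I would use the explicit parametrizations of the components furnished by Corollaries \ref{coro2.4} and \ref{coro2.5}. Concretely, I would take a generic matrix $C_a$ (resp.\ $D_a$, resp.\ $E_{a,b,c,\xi}$) and verify directly that it defines a multiplicative Hom-Lie algebra structure on $\gl_2(\C)$; that is, check that the linear map $D$ determined via (\ref{actgl2}) satisfies both $D([e_i,e_j])=[D(e_i),D(e_j)]$ and the Hom-Jacobi identity $[D(e_i),[e_j,e_k]]+[D(e_j),[e_k,e_i]]+[D(e_k),[e_i,e_j]]=0$ for all basis triples, using the structure constants (\ref{relgl2}). For $\V(\p_1)$ and $\V(\p_2)$ this is a short computation since $C_a$ and $D_a$ are essentially scalar-plus-rank-one-corrections; for $\V(\p_3)$ one uses the defining relation $\xi^2+4bc-1=0$ repeatedly to collapse the expressions. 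An even cleaner route, which I would prefer to streamline the writing, is to observe that multiplicativity already forces $D$ to be an algebra endomorphism, and for an endomorphism the Hom-Jacobi identity is automatic; so it is enough to confirm that each parametrized matrix is a Lie algebra homomorphism of $\gl_2(\C)$, which reduces to checking the five relations in (\ref{relgl2}) are preserved.

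The main obstacle is genuinely just the bookkeeping in the $\V(\p_3)$ case: the entries involve the denominators $\xi-1$ and the constraint $\xi^2+4bc-1=0$, so verifying that $E_{a,b,c,\xi}$ respects every bracket relation requires a careful, though elementary, algebraic reduction — this is the step most prone to sign or arithmetic slips. Since Lemma \ref{lem2.1} was itself proved by ``a long but direct calculation,'' it is reasonable here to likewise carry out the verification by direct substitution (or note it can be automated in a computer algebra system), and I would present it as such rather than expanding every intermediate identity. Once all three component-containments are in hand, we conclude $\V(\p_1)\cup\V(\p_2)\cup\V(\p_3)\subseteq\MLie(\gl_2(\C))\subseteq\V(I)=\V(\p_1)\cup\V(\p_2)\cup\V(\p_3)$, forcing equality throughout and completing the proof.
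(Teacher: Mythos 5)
Your primary plan coincides with the paper's proof: the second equality is Corollary \ref{coro2.9}, the inclusion $\MLie(\gl_2(\C))\subseteq\V(I)$ is Lemma \ref{lem2.1}, and the remaining step is a direct verification that every $C_a$, $D_a$ and $E_{a,b,c,\xi}$ from Corollaries \ref{coro2.4} and \ref{coro2.5} lies in $\MLie(\gl_2(\C))$, i.e.\ satisfies \emph{both} the homomorphism condition and the Hom-Jacobi identity on the basis, using the relations (\ref{relgl2}). Carried out that way, your argument is complete and is exactly the paper's.

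However, the ``cleaner route'' you say you would prefer rests on a false claim: for a Lie algebra endomorphism the Hom-Jacobi identity is \emph{not} automatic, so checking only that $C_a$, $D_a$, $E_{a,b,c,\xi}$ are homomorphisms would not prove membership in $\MLie(\gl_2(\C))$. Concretely, the automorphism $D=\Ad_{{\rm diag}(2,1)}$ of $\gl_2(\C)$, which sends $e_1\mapsto e_1$, $e_2\mapsto 2e_2$, $e_3\mapsto \tfrac12 e_3$, $e_4\mapsto e_4$, is a Lie algebra homomorphism, yet
\[
[D(e_2),[e_3,e_1]]+[D(e_3),[e_1,e_2]]+[D(e_1),[e_2,e_3]]=\bigl(2-\tfrac12\bigr)(e_1-e_4)\neq 0,
\]
so it is not a Hom-Lie structure (consistently, it lies in none of the three components). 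More conceptually, if Hom-Jacobi followed from multiplicativity, then $\Aut(\g)\subseteq\MLie(\g)$ for every $\g$, contradicting the Jin--Li theorem for simple Lie algebras other than $\ssl_2(\C)$ cited in the introduction. So retain the full verification of both conditions (which, as you note, uses $\xi^2+4bc-1=0$ to simplify the $E_{a,b,c,\xi}$ case); dropping the Hom-Jacobi check is the one step in your write-up that would genuinely fail.
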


\begin{proof}
We have seen in Lemma \ref{lem2.1} that $\MLie(\gl_2(\C))\subseteq\V(I)$. To complete the proof, by Corollary \ref{coro2.9},
it suffices to show that any $C_{a},D_{a}$ and $E_{a,b,c,\xi}$ appeared in Corollaries \ref{coro2.4} and \ref{coro2.5} belong to $\MLie(\gl_2(\C))$. One can verify the assertion by a direct calculation.
\end{proof}

\begin{rem}
{\rm
Note that each irreducible component we obtained in Theorem \ref{thm2.10} is the closure of infinite families of algebras, and 
thus there are no rigid algebras in the variety $\MLie(\gl_2(\C))$; see for example \cite[Chapter 5]{GK96} for more details about rigid algebras. 
\hbo
}\end{rem}

\section{The Affine Varieties $\MLie(\gl_n(\C))~~(n\geqslant 3)$} \label{sec3}
\setcounter{equation}{0}
\renewcommand{\theequation}
{3.\arabic{equation}}
\setcounter{theorem}{0}
\renewcommand{\thetheorem}
{3.\arabic{theorem}}

\noindent This section characterizes multiplicative Hom-Lie algebra structures on $\gl_{n}(\C)$ for all $n\geqslant 3$.
Suppose that $\{e_{1},e_{2},\dots,e_{n^{2}-1}\}$ is a basis of $\ssl_{n}(\C)$. By the classical fact that a matrix that commutes with all matrices must be a scalar matrix, we see that the center of $\gl_{n}(\C)$ consists of all scalar matrices. We may take a nonzero scalar matrix $z$ in $\gl_{n}(\C)$, and then $\gl_{n}(\C)=\ssl_{n}(\C)\oplus \C\cdot z$. Moreover, we have the following useful fact.

\begin{lem}\label{lem3.1}
$[\gl_{n}(\C),\gl_{n}(\C)]=\ssl_{n}(\C)$.
\end{lem}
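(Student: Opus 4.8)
The statement $[\gl_n(\C),\gl_n(\C)] = \ssl_n(\C)$ is a standard and elementary fact, so the plan is simply to give a clean direct argument and, where possible, lean on dimension counting.

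The plan is to prove the two inclusions separately. First I would establish $[\gl_n(\C),\gl_n(\C)] \subseteq \ssl_n(\C)$ by recalling that the trace is additive and that $\Tr(AB) = \Tr(BA)$ for all $A,B \in M_n(\C)$; hence every commutator $[A,B] = AB - BA$ has trace zero, and since the commutator subspace is spanned by such elements, it lands in the kernel of the trace form, which is precisely $\ssl_n(\C)$. For the reverse inclusion $\ssl_n(\C) \subseteq [\gl_n(\C),\gl_n(\C)]$, I would exhibit an explicit spanning set of $\ssl_n(\C)$ consisting of commutators: using the matrix units $E_{ij}$, one has $[E_{ii},E_{ij}] = E_{ij}$ for $i \neq j$, which shows every off-diagonal matrix unit is a commutator; and for the traceless diagonal part, $[E_{ij},E_{ji}] = E_{ii} - E_{jj}$, and the elements $E_{ii}-E_{jj}$ ($1\le i<j\le n$) span the space of traceless diagonal matrices. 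Since the $E_{ij}$ ($i\ne j$) together with the $E_{ii}-E_{jj}$ form a basis of $\ssl_n(\C)$, this yields $\ssl_n(\C) \subseteq [\gl_n(\C),\gl_n(\C)]$.

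Combining the two inclusions gives the equality. Alternatively, one could note that $[\gl_n(\C),\gl_n(\C)]$ is an ideal of $\gl_n(\C)$ contained in $\ssl_n(\C)$, that $\ssl_n(\C)$ is simple for $n\ge 2$, and that the commutator subalgebra is nonzero, forcing it to equal $\ssl_n(\C)$; I would keep the explicit computation as the primary argument since it is self-contained and requires no appeal to simplicity.

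There is no real obstacle here: the only mild care needed is to confirm that the listed commutators genuinely span all of $\ssl_n(\C)$ and not a proper subspace, which is immediate from a dimension count ($n^2-1$ basis elements, all realized as commutators). I expect this lemma to be a one-paragraph proof in the paper.
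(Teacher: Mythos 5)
Your argument is correct: the trace identity gives $[\gl_n(\C),\gl_n(\C)]\subseteq\ssl_n(\C)$, and the commutators $[E_{ii},E_{ij}]=E_{ij}$ ($i\neq j$) together with $[E_{ij},E_{ji}]=E_{ii}-E_{jj}$ do span all of $\ssl_n(\C)$, so both inclusions hold. Note that the paper itself supplies no proof of this lemma -- it is stated as a standard consequence of the decomposition $\gl_n(\C)=\ssl_n(\C)\oplus\C\cdot z$ -- so your self-contained computation is, if anything, more detail than the authors give, and there is no discrepancy of approach to report.
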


\begin{proof}
Note that 
$[\gl_{n}(\C),\gl_{n}(\C)]=[\ssl_{n}(\C)\oplus \C\cdot z,\ssl_{n}(\C)\oplus \C\cdot z]=[\ssl_{n}(\C),\ssl_{n}(\C)]$, which is a nonzero  ideal of $\ssl_{n}(\C)$. However, $\ssl_{n}(\C)$ is a simple Lie algebra, thus $[\ssl_{n}(\C),\ssl_{n}(\C)]=\ssl_{n}(\C)$.
\end{proof}

\begin{coro}\label{coro3.2}
Every homomorphism on $\gl_{n}(\C)$ restricts to a homomorphism on $\ssl_{n}(\C)$.
\end{coro}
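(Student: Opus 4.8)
The plan is to deduce Corollary \ref{coro3.2} directly from Lemma \ref{lem3.1}. Let $\varphi$ be a homomorphism on $\gl_n(\C)$, i.e.\ a linear endomorphism satisfying $\varphi([x,y])=[\varphi(x),\varphi(y)]$ for all $x,y\in\gl_n(\C)$. I want to show that $\varphi$ restricts to a map $\ssl_n(\C)\to\ssl_n(\C)$; since restriction of a bracket-preserving map is automatically bracket-preserving, the only thing to check is that $\varphi(\ssl_n(\C))\subseteq\ssl_n(\C)$.

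The key step is the observation that $\ssl_n(\C)$ is intrinsically characterized inside $\gl_n(\C)$ as the derived subalgebra, by Lemma \ref{lem3.1}: $\ssl_n(\C)=[\gl_n(\C),\gl_n(\C)]$. Any algebra homomorphism maps the derived subalgebra into the derived subalgebra of the target, here again $\gl_n(\C)$; more precisely, a typical element of $\ssl_n(\C)$ is a finite sum $\sum_i [x_i,y_i]$ with $x_i,y_i\in\gl_n(\C)$, and $\varphi\big(\sum_i[x_i,y_i]\big)=\sum_i[\varphi(x_i),\varphi(y_i)]\in[\gl_n(\C),\gl_n(\C)]=\ssl_n(\C)$. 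Hence $\varphi(\ssl_n(\C))\subseteq\ssl_n(\C)$, so $\varphi|_{\ssl_n(\C)}$ is a well-defined linear endomorphism of $\ssl_n(\C)$, and it preserves brackets because $\varphi$ does. This gives the desired homomorphism on $\ssl_n(\C)$, completing the proof.

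There is essentially no obstacle here: the corollary is a one-line consequence of Lemma \ref{lem3.1} together with the elementary fact that homomorphisms send commutators to commutators. The only point worth stating carefully is that ``homomorphism on $\gl_n(\C)$'' in the paper means a Lie algebra endomorphism (not necessarily bijective), so that the restriction is genuinely a self-map of $\ssl_n(\C)$ rather than merely a map into $\gl_n(\C)$; this is exactly what Lemma \ref{lem3.1} secures. I would write the argument in two or three sentences, invoking Lemma \ref{lem3.1} explicitly for the identification $\ssl_n(\C)=[\gl_n(\C),\gl_n(\C)]$.
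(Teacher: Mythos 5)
Your argument is correct and is essentially identical to the paper's own proof: both invoke Lemma \ref{lem3.1} to write an element of $\ssl_n(\C)$ as a finite sum of brackets and then use that a homomorphism sends brackets to brackets, so the image lands back in $[\gl_n(\C),\gl_n(\C)]=\ssl_n(\C)$. No gaps; nothing further is needed.
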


\begin{proof}
Let $D$ be a homomorphism from $\gl_{n}(\C)$ to itself. It suffices to show that $\ssl_{n}(\C)$ is stable under the action of $D$.
Indeed, given an element $x\in \ssl_{n}(\C)$, by Lemma \ref{lem3.1}, we may write $x=\sum_{\textrm{finite}} [x_{i},x_{j}]$ for some
$x_{i},x_{j}\in \gl_{n}(\C)$. Thus $D(x)=\sum_{\textrm{finite}} D([x_{i},x_{j}])=\sum_{\textrm{finite}} [D(x_{i}),D(x_{j})]\in [\gl_{n}(\C),\gl_{n}(\C)]=\ssl_{n}(\C)$.
\end{proof}

We are ready to prove our second result.

\begin{thm}\label{thm3.3}
Let $n\geqslant 3$ and $D\in\MLie(\gl_n(\C))$ be an arbitrary element. Then with respect to the basis $\{e_{1},e_{2},\dots,e_{n^{2}-1},z\}$, $D$ is equal to either ${\rm diag}\{1,\dots,1,a\}$ or ${\rm diag}\{0,\dots,0,a\}$, where $a\in\C$.
\end{thm}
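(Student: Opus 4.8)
The plan is to combine the Jin--Li Theorem on $\ssl_n(\C)$ with Corollary \ref{coro3.2} and then handle the central direction separately. Let $D\in\MLie(\gl_n(\C))$, so $D$ is a Lie algebra homomorphism satisfying the Hom-Jacobi identity. First I would invoke Corollary \ref{coro3.2} to see that $D$ restricts to a Lie algebra endomorphism $D|_{\ssl_n(\C)}$ of $\ssl_n(\C)$. The Hom-Jacobi identity on $\gl_n(\C)$, restricted to triples of elements of $\ssl_n(\C)$, says precisely that $D|_{\ssl_n(\C)}$ is a Hom-Lie algebra structure on $\ssl_n(\C)$; since $n\geqslant 3$, the Jin--Li Theorem (\cite{JL08}*{Proposition 2.1}) forces $D|_{\ssl_n(\C)}$ to be trivial, i.e.\ $D|_{\ssl_n(\C)}=\lambda\cdot\mathrm{id}$ for some scalar $\lambda$. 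Because $D|_{\ssl_n(\C)}$ is also a Lie algebra homomorphism and $[\ssl_n(\C),\ssl_n(\C)]=\ssl_n(\C)$, applying $D$ to a bracket gives $\lambda[x,y]=[\lambda x,\lambda y]=\lambda^2[x,y]$, whence $\lambda^2=\lambda$ and so $\lambda\in\{0,1\}$. This already pins down $D$ on the codimension-one subspace $\ssl_n(\C)$ as either $0$ or the identity.

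Next I would determine the action of $D$ on the center. Write $D(z)=w+a z$ with $w\in\ssl_n(\C)$ and $a\in\C$, using $\gl_n(\C)=\ssl_n(\C)\oplus\C z$. Since $z$ is central, $[z,x]=0$ for all $x$, so for any $x\in\gl_n(\C)$ the homomorphism property gives $0=D([z,x])=[D(z),D(x)]=[w+az,D(x)]=[w,D(x)]$. Taking $x$ to range over $\gl_n(\C)$ and using $D(\gl_n(\C))\supseteq D(\ssl_n(\C))$, I claim $w$ is forced to be central in $\ssl_n(\C)$, hence $w=0$: in the case $\lambda=1$ this is immediate because $D(\ssl_n(\C))=\ssl_n(\C)$ and the center of $\ssl_n(\C)$ is zero; in the case $\lambda=0$ one needs slightly more, namely to also feed the Hom-Jacobi identity a triple involving $z$. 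Specifically, from $[D(x),[y,z]]+[D(y),[z,x]]+[D(z),[x,y]]=0$ with $y=z$ one gets $[D(z),[x,z]] = 0$ trivially, so instead I would use a triple $x,y\in\ssl_n(\C)$ with $[x,y]$ spanning a complement and exploit $D(z)=w$; combined with $[w,[x,y]]=0$ from the homomorphism property applied to $[z,\,\cdot\,]$, and the fact that $[\ssl_n(\C),\ssl_n(\C)]=\ssl_n(\C)$, one concludes $w$ centralizes $\ssl_n(\C)$, so $w=0$. Therefore $D(z)=az$, and in the chosen basis $D=\mathrm{diag}\{1,\dots,1,a\}$ or $\mathrm{diag}\{0,\dots,0,a\}$ with $a\in\C$ arbitrary.

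Finally I would check the converse direction for completeness, namely that every such diagonal matrix is indeed a multiplicative Hom-Lie algebra structure: $\mathrm{diag}\{1,\dots,1,a\}$ and $\mathrm{diag}\{0,\dots,0,a\}$ both act as scalars ($1$ resp.\ $0$) on $\ssl_n(\C)=[\gl_n(\C),\gl_n(\C)]$ and send $z\mapsto az$, so the homomorphism property $D([x,y])=[D(x),D(y)]$ holds because brackets land in $\ssl_n(\C)$ where $D$ is multiplication by an idempotent scalar, and the Hom-Jacobi identity follows because $z$ is central and $D$ acts as a scalar on $\ssl_n(\C)$. (This converse is not, strictly speaking, part of the statement as worded, which only asserts the forward implication, so I may relegate it to a remark.)

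The main obstacle I anticipate is the case $\lambda=0$: when $D$ annihilates $\ssl_n(\C)$, the homomorphism property carries no information about $D(z)$ beyond what comes from $[z,\,\cdot\,]=0$, and that alone only gives $[w,D(x)]=[w, az\text{-part}+0]=0$, which is vacuous. So the leverage must come entirely from the Hom-Jacobi identity applied to triples $(x,y,z)$ with $x,y\in\ssl_n(\C)$; there the first two terms $[D(x),[y,z]]$ and $[D(y),[z,x]]$ vanish (as $[y,z]=[z,x]=0$), leaving $[D(z),[x,y]]=0$, i.e.\ $[w,[x,y]]=0$ for all $x,y\in\ssl_n(\C)$. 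Since $[\ssl_n(\C),\ssl_n(\C)]=\ssl_n(\C)$, this gives $[w,\ssl_n(\C)]=0$, forcing $w\in Z(\ssl_n(\C))=0$. So in fact the Hom-Jacobi identity, not the homomorphism property, is what kills $w$ in the degenerate case, and making sure that step is stated cleanly is the delicate point; everything else is routine linear algebra and bookkeeping with the decomposition $\gl_n(\C)=\ssl_n(\C)\oplus\C z$.
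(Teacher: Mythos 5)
Your proposal is correct and follows essentially the same route as the paper: restrict $D$ to $\ssl_n(\C)$ via Corollary~\ref{coro3.2}, conclude $D|_{\ssl_n(\C)}\in\{0,\mathrm{id}\}$ for $n\geqslant 3$, and then show $D(z)\in\C\cdot z$ using the homomorphism property in the identity case and the Hom-Jacobi identity on triples $(x,y,z)$ with $x,y\in\ssl_n(\C)$ together with $\ssl_n(\C)=[\ssl_n(\C),\ssl_n(\C)]$ in the zero case, exactly as the paper does. The only cosmetic difference is that you rederive $\MLie(\ssl_n(\C))=\{0,I\}$ from the Jin--Li theorem plus the idempotency of the scalar $\lambda$, whereas the paper cites \cite{XJL15}*{Corollary 3.4 (ii)} directly.
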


\begin{proof}
By \cite[Corollary 3.4 (ii)]{XJL15} we see that $\MLie(\ssl_n(\C))$ consists of the identity matrix $I_{n^{2}-1}$
and the zero matrix.  On the other hand, Corollary \ref{coro3.2} implies that $D$ restricts to an element of $\MLie(\ssl_n(\C))$. Thus the restriction of $D$ on $\ssl_n(\C)$ is either equal to $I_{n^{2}-1}$ or 0.

For the first case, we have $D(x)=x$ for $x\in\ssl_n(\C)$. Note that for $i=1,2,\dots,n^{2}-1$, we have $0=D(0)=D([e_{i},z])=[D(e_{i}),D(z)]=[e_{i},D(z)]$ and moreover, $[z,D(z)]=0$. This implies that $D(z)$ is also an element of the center of
$\gl_n(\C)$. Thus $D(z)=az$ for some $a\in\C$. Hence, with respect to the basis $\{e_{1},e_{2},\dots,e_{n^{2}-1},z\}$, we see that $D={\rm diag}\{1,\dots,1,a\}$.

For the second case, we have $D(x)=0$ for $x\in\ssl_n(\C)$. Suppose $D(z)=\sum_{i=1}^{n^{2}-1} a_{i}e_{i}+az$, where
all $a_{i}$ and $a$ belong to $\C$. For $x\in \ssl_n(\C)$, as $\ssl_n(\C)=[\ssl_n(\C),\ssl_n(\C)]$, we may write
$x=\sum_{i,j=1}^{n^{2}-1}a_{ij}[e_{i},e_{j}]$ for  $a_{ij}\in\C$. Then
\begin{eqnarray*}
[D(z),x] & = & \left[D(z),\sum_{i,j=1}^{n^{2}-1}a_{ij}[e_{i},e_{j}]\right]\\
 & = & \sum_{i,j=1}^{n^{2}-1}a_{ij}[D(z),[e_{i},e_{j}]]\quad (\textrm{by the Hom-Jacobi identity})\\
 &=&   -\sum_{i,j=1}^{n^{2}-1}a_{ij}([D(e_{i}),[e_{j},z]]+[D(e_{j}),[z,e_{i}]])=0.
\end{eqnarray*}
This fact, together with $[D(z),z]=0$, implies that $D(z)$ is in the center of $\gl_n(\C)$. Thus $D(z)=az$ for some $a\in\C$.
This shows that in this case, $D={\rm diag}\{0,\dots,0,a\}$.
\end{proof}

\begin{rem}
{\rm
Note that a Hom-Lie algebra structure on a vector space is determined by two parts: a linear map $D$ and 
the bracket product $[-,-]$. The two parts both are required to satisfy some conditions. In the present paper, we fix the part of bracket product on the vector space, thus if we embed $\MLie(\gl_n(\C))$ into the whole variety  $\MLie(\C^{n^2})$ of 
$n^2$-dimensional multiplicative complex Hom-Lie algebras, Theorem \ref{thm3.3} says that $\MLie(\gl_n(\C))$ is the union of two lines in $\MLie(\C^{n^2})$ when $n\geqslant 3$.
\hbo
}\end{rem}

\section{The Affine Varieties $\MLie(\h_{2n+1}(\C))$ and $\MLie(\uu_{n}(\C))$}\label{sec4}
\setcounter{equation}{0}
\renewcommand{\theequation}
{4.\arabic{equation}}
\setcounter{theorem}{0}
\renewcommand{\thetheorem}
{4.\arabic{theorem}}

\noindent In this section, we study multiplicative Hom-Lie algebra structures on the Heisenberg Lie algebra and the Lie algebra of upper triangular matrices  which are the most typical examples of nilpotent  and solvable Lie algebras respectively.

\subsection*{Heisenberg Lie algebras}
Let $n\in\N^{+}$. Recall that the complex Heisenberg Lie algebra $\h_{2n+1}(\C)$  of dimension $2n+1$
generated by the following $(n+2)\times (n+2)$-matrices
$$x_{i}=\begin{pmatrix}
      0&e_{i}^{T}&  0  \\
      0&  0_{n}&0\\
      0&0&0
\end{pmatrix}, y_{j}=\begin{pmatrix}
      0&0&  0  \\
      0&  0_{n}&e_{j}\\
      0&0&0
\end{pmatrix},z=\begin{pmatrix}
      0&0&  1  \\
      0&  0_{n}&0\\
      0&0&0
\end{pmatrix}$$
where $1\leqslant i,j\leqslant n$, $\{e_{1},\dots,e_{n}\}$ is the standard basis of $\C^{n}$, $e_{i}^{T}$ denotes
the transpose of $e_{i}$ and $0_{n}$ denotes the zero matrix of size $n$.
In $\h_{2n+1}(\C)$, there are the following relations:
\begin{equation}
\label{eq4.1}
[x_{i},y_{j}]=\delta_{ij}\cdot z, [x_{i},z]=0=[y_{j},z].
\end{equation}

We have already known a complete characterization of multiplicative Hom-Lie algebras on the 3-dimensional complex Heisenberg Lie algebra $\h_{3}(\C)$; see for example \cite[Corollary 2.3]{AC19}.

\begin{prop}\label{prop4.1}
The affine variety $\MLie(\h_3(\C))$ is a 6-dimensional irreducible affine variety, consisting of the following matrices:
$$\begin{pmatrix}
     bf-ce &a&d    \\
      0 &b&e    \\
        0 &c&f
\end{pmatrix}$$
where $a,b,c,d,e,f\in\C$. In particular, there exists a nontrivial involutive Hom-Lie algebra structure on $\h_{3}(\C)$.
\end{prop}

\begin{proof}
See \cite[Corollary 2.3]{AC19} for a proof of the first assertion. More description on 3-dimensional Hom-Lie algebras can be found in \cite{GDSSV20} and \cite{Rem18}. For the second assertion, we take $a=c=d=e=0, b=f=-1$ and
$$D=\begin{pmatrix}
      1&0&0    \\
     0 &-1&0\\
     0&0&-1
\end{pmatrix}.$$ Then $D\in \MLie(\h_3(\C))$ gives rise to a nontrivial involutive Hom-Lie structure on $\h_{3}(\C)$.
\end{proof}

Giving a complete description of multiplicative Hom-Lie algebra structures on $\h_{2n+1}(\C)$ is a difficult and challenging task.
Here we construct a family of multiplicative Hom-Lie algebra structures on $\h_{2n+1}(\C)$, generalizing
the construction in Proposition \ref{prop4.1}; and we will see that this new construction leads to some remarkable consequences.

\begin{prop}\label{prop4.2}
Let $D(a,b,c,d;\alpha):=\begin{pmatrix}
    ad-bc  & \alpha  \\
      0&\Theta
\end{pmatrix}$ be the block matrix of size $2n+1$, where $\alpha=(a_{1},b_{1},\dots,a_{n},b_{n})$, $\Theta={\rm diag}\{\underbrace{\theta,\dots,\theta}_{n}\}$ and
$\theta=\begin{pmatrix}
      a& b   \\
      c& d
\end{pmatrix}$. Then $$D(a,b,c,d;\alpha)\in \MLie(\h_{2n+1}(\C)).$$
\end{prop}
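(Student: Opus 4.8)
The plan is to verify directly that the proposed linear transformation $D := D(a,b,c,d;\alpha)$ satisfies the two defining conditions for a multiplicative Hom-Lie algebra structure on $\h_{2n+1}(\C)$: that $D$ is a Lie algebra homomorphism, and that it obeys the Hom-Jacobi identity. Since $\h_{2n+1}(\C)$ is generated by $x_1,\dots,x_n,y_1,\dots,y_n,z$ with the relations in (\ref{eq4.1}), it suffices to check both conditions on basis elements. From the block form, $D(z) = (ad-bc)\,z$, $D(x_i) = a\,x_i + c\,y_i + a_i\,z$, and $D(y_i) = b\,x_i + d\,y_i + b_i\,z$, where I read off the action on $x_i,y_i$ from the diagonal block $\theta$ acting on the pair $(x_i,y_i)$ together with the first row $\alpha$ recording the $z$-components.

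First I would check the homomorphism property. The only nontrivial brackets to test are $[x_i,y_j]$; for $i\neq j$ this bracket is $0$, so I must show $[D(x_i),D(y_j)]=0$, which follows because $[x_i,x_j]=[x_i,y_j]=[y_i,x_j]=[y_i,y_j]=0$ for $i\neq j$ and all $z$-terms are central. For $i=j$ we need $[D(x_i),D(y_i)] = D([x_i,y_i]) = D(z) = (ad-bc)\,z$. Expanding $[a\,x_i + c\,y_i + a_i\,z,\; b\,x_i + d\,y_i + b_i\,z]$ and using $[x_i,y_i]=z$, $[y_i,x_i]=-z$, and centrality of $z$, the bracket collapses to $(ad - cb)\,z$, exactly as required. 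Brackets involving $z$ are automatic since $z$ is central and $D(z)$ is a multiple of $z$.

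Next I would check the Hom-Jacobi identity $[D(x),[y,w]] + [D(y),[w,x]] + [D(w),[x,y]] = 0$ for all basis triples $x,y,w$. This is where the bulk of the (routine) bookkeeping lies, but it is short: every bracket $[u,v]$ of basis elements of $\h_{2n+1}(\C)$ is a scalar multiple of the central element $z$, and $[D(u),z]=0$ for every $u$ because $D$ maps everything into the span of the $x_i,y_i,z$ and $z$ is central. Hence each of the three terms $[D(x),[y,w]]$ is a scalar times $[D(x),z]=0$, so the whole sum vanishes identically — in fact the Hom-Jacobi identity holds for \emph{any} linear map whose image lies in $\h_{2n+1}(\C)$ once one notes $[\h_{2n+1}(\C),\h_{2n+1}(\C)] = \C z$ is central, so this costs essentially nothing.

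I do not anticipate a genuine obstacle here; the statement is a construction rather than a structural theorem, and the nilpotency of $\h_{2n+1}(\C)$ (with derived algebra the one-dimensional center) makes the Hom-Jacobi identity trivial. The only point requiring care is the homomorphism computation $[D(x_i),D(y_i)] = (ad-bc)z$, i.e. getting the determinant of $\theta$ to appear with the correct sign; this is the reason the scalar $ad-bc$ sits in the top-left corner of $D$. I would present the homomorphism check on $[x_i,y_j]$ in full and dispatch the Hom-Jacobi identity with the one-line observation above about the derived algebra being central.
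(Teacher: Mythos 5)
Your proposal is correct and follows essentially the same route as the paper: read off $D(z)=(ad-bc)z$, $D(x_i)=a_iz+ax_i+cy_i$, $D(y_i)=b_iz+bx_i+dy_i$, verify the homomorphism property on the brackets $[x_i,y_j]$ (the determinant appearing for $i=j$) and on brackets with the central $z$, and then dispose of the Hom-Jacobi identity by noting that all brackets of basis elements lie in the central line $\C z$, so every term $[D(u),[v,w]]$ vanishes. Your added remark that the Hom-Jacobi identity therefore holds for any linear map on $\h_{2n+1}(\C)$ is a harmless strengthening of the same observation the paper makes implicitly.
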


\begin{proof}
For simplicity, we denote $D(a,b,c,d;\alpha)$ by $D$ throughout the proof.
Then $D$ acts on the basis $\{z,x_{1},y_{1},x_{2},y_{2},\dots,x_{n},y_{n}\}$ of  $\h_{2n+1}(\C)$ as follows:
$$D(z)=\det(\theta)\cdot z;\quad D(x_{i})=a_{i}z+ax_{i}+cy_{i}; \quad D(y_{i})=b_{i}z+bx_{i}+dy_{i}$$
for $1\leqslant i\leqslant n$.  Note that $z$ is a central element of $\h_{2n+1}(\C)$, thus for $1\leqslant i,j\leqslant n$, we have
\begin{eqnarray*}
[D(x_{i}),D(y_{j})] & = & [a_{i}z+ax_{i}+cy_{i},b_{j}z+bx_{j}+dy_{j}] \\
 & = & [ax_{i},bx_{j}]+[ax_{i},dy_{j}]+[cy_{i},bx_{j}]+[cy_{i},dy_{j}]\\
 &=&\delta_{ij}\cdot ad\cdot z-\delta_{ji}\cdot bc\cdot z\\
 &=&\delta_{ij}\cdot \det(\theta)\cdot z
\end{eqnarray*}
and $D([x_{i},y_{j}])=D(\delta_{ij}\cdot z)=\delta_{ij}\cdot  \det(\theta)\cdot z=[D(x_{i}),D(y_{j})]$.
Moreover, using the fact again that $z$ is a central element of $\h_{2n+1}(\C)$ we see that
$D([x,z])=0=\det(\theta)\cdot[D(x),z]=[D(x),D(z)]$ for each $x\in\{x_{1},\dots,x_{n},y_{1},\dots,y_{n}\}$. Thus
$D$ is an algebra homomorphism.

Now to prove that $D\in \MLie(\h_{2n+1}(\C))$, it is sufficient to show that the Hom-Jacobi identity follows for $D$. For arbitrary $x,y,w\in\{z,x_{1},\dots,x_{n},y_{1},\dots,y_{n}\}$,  the generating relations (\ref{eq4.1}) imply that
$[y,w], [w,x]\textrm{ and  }[x,y]\in \C\cdot z$ are central elements of $\h_{2n+1}(\C)$. Hence,
$[D(x),[y,w]]=[D(y),[w,x]]=[D(w),[x,y]]=0$ and
$[D(x),[y,w]]+[D(y),[w,x]]+[D(w),[x,y]]=0.$
Therefore, the Hom-Jacobi identity for $D$ follows; and $D\in \MLie(\h_{2n+1}(\C))$ as desired.
\end{proof}

\begin{coro}\label{coro4.3}
There exists a nontrivial involutive Hom-Lie algebra structure on $\h_{2n+1}(\C)$.
\end{coro}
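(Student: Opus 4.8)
The plan is to exhibit an explicit involutive element of $\MLie(\h_{2n+1}(\C))$ by specializing the family $D(a,b,c,d;\alpha)$ constructed in Proposition \ref{prop4.2}. First I would recall that an involution is a linear transformation $D$ with $D^{2}=\mathrm{id}$, so it suffices to choose parameters $a,b,c,d$ and the vector $\alpha$ for which the block matrix $D(a,b,c,d;\alpha)$ squares to the identity of size $2n+1$, since membership in $\MLie(\h_{2n+1}(\C))$ is already guaranteed by Proposition \ref{prop4.2}.

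The natural choice, generalizing the $n=1$ case recorded in Theorem \ref{thm4.1}, is to take $\alpha=0$ and $\theta=\begin{pmatrix} a & b\\ c& d\end{pmatrix}$ to be an involutive $2\times 2$ matrix other than $\pm I_{2}$; the simplest is $\theta=\mathrm{diag}\{1,-1\}$, i.e.\ $a=1,b=c=0,d=-1$, which has $\det(\theta)=-1$ and $\theta^{2}=I_{2}$. Then $D(1,0,0,-1;0)$ is the block-diagonal matrix $\mathrm{diag}\{-1,\theta,\dots,\theta\}$, and squaring it blockwise gives $\mathrm{diag}\{1,I_{2},\dots,I_{2}\}=I_{2n+1}$, so $D$ is an involution. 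It is nontrivial because it is neither the identity nor a scalar; on the basis it sends $z\mapsto -z$, $x_{i}\mapsto x_{i}$, $y_{i}\mapsto -y_{i}$, which is visibly not the identity map. By Proposition \ref{prop4.2} this $D$ lies in $\MLie(\h_{2n+1}(\C))$, hence in $\ILie(\h_{2n+1}(\C))$, proving the corollary.

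There is essentially no obstacle here: the only thing to check is that the chosen specialization both squares to the identity and is genuinely nontrivial, and both are immediate from the block-diagonal form once one picks $\theta$ to be a nonscalar $2\times 2$ involution. The mild subtlety worth a sentence is that $\det(\theta)=-1$ is what forces $D(z)=-z$, so that $D$ is not the identity even though it fixes every $x_{i}$; and conversely any choice with $\det(\theta)=1$, $\theta\neq I_{2}$ (for instance $\theta=\mathrm{diag}\{-1,-1\}$, as in Theorem \ref{thm4.1}) would also work, giving $D(z)=z$ but $D(x_{i})=-x_{i}$. Either way the argument is a one-line specialization of the already-established Proposition \ref{prop4.2}, and I would present it as such rather than redoing any of the homomorphism or Hom-Jacobi verifications.
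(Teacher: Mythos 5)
Your proposal is correct and follows essentially the same route as the paper: the paper's proof also just specializes Proposition \ref{prop4.2} with $\alpha=0$ to a nonscalar... in fact to the choice $D(-1,0,0,-1;0)$ (i.e.\ $\theta=-I_{2}$, the alternative you mention at the end), checks $D^{2}=I_{2n+1}$ and $D\neq I_{2n+1}$, and concludes. Your choice $D(1,0,0,-1;0)$ works just as well, so there is nothing to fix.
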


\begin{proof}
Consider $D=D(-1,0,0,-1;0)\in \MLie(\h_{2n+1}(\C))$. Then $D^{2}=I_{2n+1}$ and $D\neq I_{2n+1}$. Thus it gives rise to a
nontrivial involutive Hom-Lie algebra structure on $\h_{2n+1}(\C)$.
\end{proof}

\begin{coro}
There exists a non-involutive nontrivial  regular Hom-Lie algebra structure on $\h_{2n+1}(\C)$. As a result,
$\ILie(\h_{2n+1}(\C))$ is strictly contained in $\RLie(\h_{2n+1}(\C))$.
\end{coro}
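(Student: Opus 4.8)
The plan is to exhibit an explicit element of $\RLie(\h_{2n+1}(\C))$ that is not an involution, and then combine this with Corollary \ref{coro4.3} to obtain the strict containment. The natural source of such elements is Proposition \ref{prop4.2}: the matrix $D(a,b,c,d;\alpha)$ is a multiplicative Hom-Lie algebra structure for every choice of parameters, so I only need to pin down which of these are automorphisms (hence regular) and which of the automorphisms fail to be involutive.

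First I would observe that $D(a,b,c,d;\alpha)$ is invertible if and only if its block triangular form is invertible, which (since the top-left scalar block is $\det(\theta)=ad-bc$ and the remaining diagonal blocks are copies of $\theta$) happens precisely when $\det(\theta)=ad-bc\neq 0$. In that case $D(a,b,c,d;\alpha)$ is a bijective algebra homomorphism of $\h_{2n+1}(\C)$, hence an automorphism, so it lies in $\RLie(\h_{2n+1}(\C))$. Second, I would make a concrete choice with $ad-bc\neq 0$ but with $\theta$ not an involution and with $D$ itself not squaring to the identity: for instance take $\alpha=0$ and $\theta=\begin{pmatrix} 2 & 0 \\ 0 & 1/2 \end{pmatrix}$, so $\det(\theta)=1$, and set $D:=D(2,0,0,1/2;0)$. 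Then $D$ is invertible and $D\in\MLie(\h_{2n+1}(\C))$ by Proposition \ref{prop4.2}, so $D\in\RLie(\h_{2n+1}(\C))$; but $D^2$ acts on $x_1$ by multiplication by $4\neq 1$, so $D^2\neq I_{2n+1}$ and $D\notin\ILie(\h_{2n+1}(\C))$. This shows $D$ is non-involutive, nontrivial, and regular.

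Finally, for the strictness of the containment $\ILie(\h_{2n+1}(\C))\subsetneq\RLie(\h_{2n+1}(\C))$, the inclusion itself is part of (\ref{eq1.1}), and the element $D$ just constructed lies in the right-hand set but not the left, giving the strict inequality. There is essentially no obstacle here: the only thing to check carefully is that the chosen $\theta$ has nonzero determinant (so that $D$ is genuinely an automorphism rather than merely a homomorphism) while simultaneously $D^2\neq I$, and the computation of $D^2$ reduces immediately to computing $\theta^2$ because $D(a,b,c,d;\alpha)$ is block-diagonal up to the first row and $\det(\theta)\cdot\theta^2$ controls everything. One should also note in passing that the triviality exclusion is automatic since $D\neq I_{2n+1}$.
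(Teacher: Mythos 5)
Your proposal is correct and follows essentially the same route as the paper: both exhibit a concrete member of the family from Proposition \ref{prop4.2} that is invertible but not an involution (the paper takes $D(1,1,-1,1;\alpha)$ with $\det D = 2^{n+1}$, you take $D(2,0,0,1/2;0)$), and then invoke the containment from (\ref{eq1.1}) to conclude strictness. The only difference is the choice of witness, and your verification that $\det(\theta)\neq 0$ gives an automorphism while $D^{2}(x_{1})=4x_{1}\neq x_{1}$ rules out involutivity is accurate.
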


\begin{proof}
Let $D=D(1,1,-1,1;\alpha)$ with $\alpha$ arbitrary. Then $\det(D)=2^{n+1}$ and $D^{2}\neq I_{2n+1}$. Hence, $D\in \RLie(\h_{2n+1}(\C))\setminus \ILie(\h_{2n+1}(\C))$ is a
nontrivial non-involutive Hom-Lie algebra structure on $\h_{2n+1}(\C)$.
\end{proof}

\begin{coro}\label{coro4.5}
$\RLie(\h_{2n+1}(\C))$ is strictly contained in $\MLie(\h_{2n+1}(\C))$.
\end{coro}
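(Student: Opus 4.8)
The plan is to exhibit an explicit element of $\MLie(\h_{2n+1}(\C))$ that fails to be regular, i.e. that is not an automorphism of $\h_{2n+1}(\C)$. Since $\RLie(\h_{2n+1}(\C))\subseteq\MLie(\h_{2n+1}(\C))$ already by \eqref{eq1.1}, it remains only to produce a witness for the strictness. The natural source of such a witness is Proposition \ref{prop4.2}: among the matrices $D(a,b,c,d;\alpha)$ we need only pick parameters making the map singular. For instance, take $\theta=\begin{pmatrix} 0 & 0 \\ 0 & 0\end{pmatrix}$, that is $a=b=c=d=0$, and $\alpha$ arbitrary (say $\alpha=0$), and set $D:=D(0,0,0,0;0)$. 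By Proposition \ref{prop4.2} this $D$ lies in $\MLie(\h_{2n+1}(\C))$.

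Next I would check that $D$ is not regular. The block form of $D$ in Proposition \ref{prop4.2} gives $\det(D)=\det(\theta)\cdot\det(\Theta)=\det(\theta)^{n+1}=0$ for our choice, so $D\notin\GL(\h_{2n+1}(\C))$; in particular $D\notin\Aut(\h_{2n+1}(\C))$ and hence $D\notin\RLie(\h_{2n+1}(\C))$. (Concretely, $D$ is simply the zero map in this instance, which annihilates $\h_{2n+1}(\C)$ and is patently not an automorphism.) Combining, $D\in\MLie(\h_{2n+1}(\C))\setminus\RLie(\h_{2n+1}(\C))$, which yields the strict containment.

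There is essentially no obstacle here: the only thing to be mildly careful about is that $\MLie$ genuinely contains a non-invertible element, and this is handed to us by Proposition \ref{prop4.2} together with the computation $\det D(a,b,c,d;\alpha)=\det(\theta)^{n+1}$, which vanishes as soon as $\theta$ is singular. If one prefers a nonzero witness, the same argument works with $\theta=\begin{pmatrix}1&0\\0&0\end{pmatrix}$, for which $\det D=0$ while $D$ acts nontrivially; either choice completes the proof.
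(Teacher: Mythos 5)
Your proof is correct and follows essentially the same route as the paper: both appeal to Proposition \ref{prop4.2} and choose a singular $\theta$, the paper taking $D(1,1,1,1;\alpha)$ (so $\det\theta=0$ but $D\neq 0$), while you take $\theta=0$ or $\mathrm{diag}(1,0)$. Your determinant computation $\det D(a,b,c,d;\alpha)=\det(\theta)^{n+1}$ is valid, and since the corollary only asserts strict containment, even the zero map suffices as a witness, though the paper's choice has the mild advantage of being a nontrivial (nonzero) non-regular structure.
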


\begin{proof}
The element $D=D(1,1,1,1;\alpha)\in \MLie(\h_{2n+1}(\C))\setminus \RLie(\h_{2n+1}(\C))$ is a
nontrivial non-regular Hom-Lie algebra structure on $\h_{2n+1}(\C)$.
\end{proof}

\begin{coro}
The affine variety $\MLie(\h_{2n+1}(\C))$ has dimension at least $2n+4$.
\end{coro}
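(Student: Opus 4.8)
The plan is to exhibit a parametrized family inside $\MLie(\h_{2n+1}(\C))$ whose parameter count is at least $2n+4$, and to argue that this family sweeps out a subvariety of that dimension. The obvious candidate is precisely the family $D(a,b,c,d;\alpha)$ constructed in Proposition \ref{prop4.2}: here the parameters are $a,b,c,d\in\C$ together with $\alpha=(a_1,b_1,\dots,a_n,b_n)\in\C^{2n}$, giving a total of $2n+4$ free complex parameters. So the first step is simply to observe that the assignment $\Phi\colon\C^{2n+4}\to M_{2n+1}(\C)$ sending $(a,b,c,d,a_1,b_1,\dots,a_n,b_n)$ to the block matrix $D(a,b,c,d;\alpha)$ is a morphism of affine varieties with image contained in $\MLie(\h_{2n+1}(\C))$ by Proposition \ref{prop4.2}.

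Next I would check that $\Phi$ is injective. This is immediate from the explicit shape of $D(a,b,c,d;\alpha)$: reading off the diagonal $2\times2$ blocks $\theta$ recovers $a,b,c,d$, and reading off the first row recovers $\alpha$ (the top-left entry $ad-bc$ is then automatically consistent). An injective morphism from an irreducible variety has image of the same dimension as its generic fiber allows; more precisely, since $\C^{2n+4}$ is irreducible of dimension $2n+4$ and $\Phi$ is injective, the Zariski closure $\overline{\Phi(\C^{2n+4})}$ is an irreducible closed subvariety of $\MLie(\h_{2n+1}(\C))$ of dimension exactly $2n+4$ (the dimension cannot drop under an injective morphism, by the fiber-dimension theorem, and it cannot exceed $2n+4$ since the source has that dimension). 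Consequently $\dim\MLie(\h_{2n+1}(\C))\geqslant 2n+4$, which is the claim.

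The only genuinely delicate point is the dimension-preservation argument, so I would state it carefully rather than wave at it: for a dominant morphism $f\colon X\to Y$ of irreducible varieties one has $\dim X=\dim Y+\dim(\text{generic fiber})$, and injectivity forces the generic fiber to be a point, hence $\dim\overline{f(X)}=\dim X$. Alternatively, and perhaps more cleanly for an $\mathrm{amsart}$-style writeup, one can avoid fiber-dimension theory entirely by exhibiting a single point of the family at which the Jacobian of $\Phi$ has rank $2n+4$; since $\Phi$ is linear in $\alpha$ and polynomial of low degree in $a,b,c,d$, computing this Jacobian at, say, the point corresponding to $D(1,0,0,1;0)$ is routine, and a nonvanishing $(2n+4)$-minor there shows $\Phi$ is dominant onto a $(2n+4)$-dimensional component. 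I expect the main obstacle to be purely expository: deciding which of these two justifications to present, and making sure the ambient affine variety structure on $\MLie(\h_{2n+1}(\C))$ (as a closed subvariety of $M_{2n+1}(\C)$ cut out by the multiplicativity and Hom-Jacobi equations, as set up in the introduction) is invoked consistently. There is no hard computation; the content is entirely the observation that Proposition \ref{prop4.2} already supplies a $(2n+4)$-parameter injective family.
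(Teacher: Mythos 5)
Your proposal is correct and takes essentially the same route as the paper: both exhibit the $(2n+4)$-parameter family $D(a,b,c,d;\alpha)$ of Proposition \ref{prop4.2} inside $\MLie(\h_{2n+1}(\C))$ and deduce the dimension bound. The paper simply asserts that this family is a subvariety of dimension $2n+4$, whereas you supply the (routine but genuine) justification via injectivity of the parametrization and the fiber-dimension argument, which the paper leaves implicit.
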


\begin{proof}
The affine variety consisting of all $D(a,b,c,d;\alpha)$, which is contained in $\MLie(\h_{2n+1}(\C))$ by Proposition \ref{prop4.2},  has dimension $2n+4$. Thus $\dim(\MLie(\h_{2n+1}(\C))\geqslant 2n+4$.
\end{proof}

\subsection*{Upper triangular Lie algebras}
Let $\uu_{n}(\C)$ be the Lie algebra of upper triangular matrices over $\C$. We may take the standard basis
$\{E_{ij}\mid 1\leqslant i\leqslant j\leqslant n\}$ for $\uu_{n}(\C)$, where $E_{ij}$ denotes the $n\times n$-matrix in which
the $(i,j)$-entry is 1 and zero otherwise. Then $\dim(\uu_{n}(\C))=n(n+1)/2$ and there are nontrivial relations among the generators $E_{ij}$:
\begin{equation}
\label{ }
[E_{ij},E_{k\ell}]=\delta_{jk}\cdot E_{i\ell}-\delta_{\ell i}\cdot E_{kj}.
\end{equation}
Using the method in Section \ref{sec2}, here we give a complete description on multiplicative Hom-Lie algebras on the  Lie algebras $\uu_{2}(\C)$ and $\uu_{3}(\C)$ with a sketch of proofs, without going into detailed proofs.

\begin{thm}\label{thm4.7}
The affine variety $\MLie(\uu_{2}(\C))$ can be decomposed into three 4-dimensional irreducible components $\V(\p_{1}),\V(\p_{2})$ and $\V(\p_{3})$, where
\begin{eqnarray*}
\V(\p_{1}) & = & \left\{\begin{pmatrix}
      a& 0&c   \\
      b&  0&d\\
      a&0&c
\end{pmatrix}\Bigg|~ a,b,c,d\in\C\right\}, \\
\V(\p_{2}) & = & \left\{\begin{pmatrix}
      a& 0&d  \\
      b&  c&-b\\
      a-1&0&d+1
\end{pmatrix}\Bigg|~ a,b,c,d\in\C\right\}, \\
\V(\p_{3}) & = & \left\{\begin{pmatrix}
      a& 0&d  \\
      b&  0&-b\\
      c&0&a-c+d
\end{pmatrix}\Bigg|~ a,b,c,d\in\C\right\}.
\end{eqnarray*}
\end{thm}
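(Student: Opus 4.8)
The plan is to treat $\uu_2(\C)$ the same way Section \ref{sec2} treats $\gl_2(\C)$, namely by encoding the defining conditions as a polynomial system, extracting irreducible components, and then verifying membership. Concretely, $\uu_2(\C)$ has standard basis $\{E_{11},E_{12},E_{22}\}$ with the single nontrivial relation $[E_{11},E_{12}]=E_{12}=-[E_{22},E_{12}]$ (and $[E_{11},E_{22}]=0$). Writing a candidate $D$ as a $3\times 3$ matrix $(x_{ij})$ acting on this basis, the condition $D\in\MLie(\uu_2(\C))$ amounts to: (i) $D$ is a Lie homomorphism, i.e.\ $D([E_{11},E_{12}])=[D(E_{11}),D(E_{12})]$ together with $D([E_{11},E_{22}])=[D(E_{11}),D(E_{22})]$ and $D([E_{22},E_{12}])=[D(E_{22}),D(E_{12})]$; and (ii) the Hom-Jacobi identity holds on all triples of basis elements. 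Since $\uu_2(\C)$ is only $3$-dimensional and its derived algebra is the $1$-dimensional ideal $\C E_{12}$, these reduce to a small explicit system in the nine variables $x_{ij}$, generating an ideal $I\subseteq\C[x_{ij}\mid 1\le i,j\le 3]$ with $\MLie(\uu_2(\C))\subseteq\V(I)$, exactly as in Lemma \ref{lem2.1}.

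The first main step is to compute a primary (or at least irreducible-component) decomposition of $\V(I)$. Because $\uu_2(\C)$ is small, I expect the radical $\sqrt I$ to factor cleanly: one applies the Gr\"obner-basis machinery of \cite{DK15} (elimination, colon ideals, saturation) to write $\sqrt I=\p_1\cap\p_2\cap\p_3$ with each $\p_i$ prime, and to identify $\V(\p_i)$ with the three parametrized families displayed in the statement. To confirm primality of each $\p_i$ it suffices, as in Lemma \ref{lem2.3}, to exhibit an explicit $\C$-algebra isomorphism $A/\p_i\cong\C[t_1,t_2,t_3,t_4]$ (a polynomial ring in the four evident parameters $a,b,c,d$), which is immediate once the generators are known since in each case the remaining five coordinates are expressed as polynomials — in fact affine-linear or at worst bilinear — in $a,b,c,d$; thus each $\V(\p_i)$ is an irreducible $4$-dimensional affine variety. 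The containment $\V(\p_1)\cup\V(\p_2)\cup\V(\p_3)\subseteq\V(I)$ is checked by verifying $I\subseteq\p_i$ for each $i$ (generator by generator, modulo $\p_i$), mirroring Lemma \ref{lem2.6}; the reverse containment $\V(I)\subseteq\V(\p_1)\cup\V(\p_2)\cup\V(\p_3)$ follows from $\p_1\cap\p_2\cap\p_3\subseteq\sqrt I$, which in turn follows from showing a suitable product of the $\p_i$ lies in $I$, as in Lemmas \ref{lem2.7}--\ref{lem2.8}.

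Finally, to upgrade $\V(I)$ to $\MLie(\uu_2(\C))$ one shows the reverse inclusion $\V(I)\subseteq\MLie(\uu_2(\C))$: plug each generic matrix $E\in\V(\p_i)$ back into the Lie-homomorphism equations and the Hom-Jacobi identity and check they vanish identically in $a,b,c,d$. Since in every component the image of the derived ideal $\C E_{12}$ is again contained in a $1$-dimensional space and $D$ restricted there is multiplication by an explicit scalar (read off from the third column/row of the displayed matrices), this verification is a short direct computation, exactly the analogue of the proof of Theorem \ref{thm2.10}.

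I expect the only real obstacle to be the Gr\"obner-basis decomposition step: choosing a monomial order in which the primary decomposition of $\sqrt I$ is transparent, and certifying that the three primes obtained are genuinely all the minimal primes (no embedded or spurious components). Everything else — primality via explicit parametrization, the two inclusions $I\subseteq\p_i$ and $\prod\p_i\subseteq I$, and the final membership check — is routine, if tedious, polynomial bookkeeping, and this is precisely why the authors say the result can be stated "without going into detailed proofs."
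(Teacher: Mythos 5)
Your proposal is correct and follows essentially the same route as the paper: the authors give no detailed argument for this theorem, stating only that it is obtained ``using the method appeared in Section \ref{sec2}," and your plan (form the vanishing ideal from the homomorphism and Hom-Jacobi conditions, decompose it into the three prime components via Gr\"obner-basis computations as in Lemmas \ref{lem2.3}--\ref{lem2.8}, and then verify directly that each parametrized family lies in $\MLie(\uu_2(\C))$ as in Theorem \ref{thm2.10}) is exactly that method. The only simplification you could note is that all three displayed components are cut out by linear forms, so primality and the dimension count are immediate without the colon-ideal work needed for $\p_3$ in the $\gl_2(\C)$ case.
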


\begin{proof}[Sketch of Proof]
Since $\uu_{2}(\C)$ is a 3-dimensional Lie algebra, we embed $\MLie(\uu_{2}(\C))$ into an affine variety in the 9-dimensional affine space by expressing each element in $\MLie(\uu_{2}(\C))$ as a $3\times 3$-matrix over $\C$. Consider the ideal $I$ of 
the polynomial algebra $\C[x_{ij}\mid 1\leqslant i,j\leqslant 3]$ generated by 
$\{x_{11} + x_{13} - x_{31} - x_{33}, x_{12}, x_{13}x_{21} + x_{13}x_{23} - x_{21}x_{33} - x_{23}x_{33}, x_{13}x_{22} - x_{22}x_{33} + x_{22}, x_{21}x_{22} + x_{22}x_{23}, x_{32}\}$. This set is actually a Gr\"obner basis for $I$ with respect to the lexicographic ordering: $x_{11}>x_{12}>x_{13}> x_{21}>x_{22}>x_{23}>x_{31}> x_{32}> x_{33}$. We use the same strategy in Remark \ref{rem2.2} to prove that $\MLie(\uu_{2}(\C))=\V(I)$. First of all, a direct calculation shows that $\MLie(\uu_{2}(\C))$ is contained in $\V(I)$. Secondly, we need to decompose the ideal $I$, and the similar method in Section 2 can be applied to conclude that $I$ has three irreducible components: $\p_1,\p_2$, and $\p_3$, where
\begin{eqnarray*}
\p_1 & = & \ideal {x_{11} - x_{31}, x_{12}, x_{13} - x_{33}, x_{22}, x_{32}} \\
\p_2 & = & \ideal{x_{11} - x_{31} - 1, x_{12}, x_{13} - x_{33} + 1, x_{21} + x_{23}, x_{32}}\\ 
\p_3 & = & \ideal{x_{11} + x_{13} - x_{31} - x_{33}, x_{12}, x_{21} + x_{23}, x_{22}, x_{32}}.
\end{eqnarray*}
Now it is not difficult to see that $\p_1,\p_2$, and $\p_3$ are prime ideals and elements in each $\V(\p_i)$ are of the matrix form in the statement. One may show that $\V(I)=\V(\p_1)\cup\V(\p_2)\cup\V(\p_3)$, which together with a direct verification that all elements in $\V(\p_1)\cup\V(\p_2)\cup\V(\p_3)$ give rise to a multiplicative Hom-Lie algebra structure on $\uu_{2}(\C)$, forces that $\V(I)\subseteq\MLie(\uu_{2}(\C))$. Therefore, $\MLie(\uu_{2}(\C))=\V(I)$.
\end{proof}

A similar strategy can be applied to describe the variety $\MLie(\uu_{3}(\C))$. Note that $\dim_\C(\uu_{3}(\C))$ is equal to 6.
Thus each element in $\MLie(\uu_{3}(\C))$ can be expressed by a $6\times 6$-matrix over $\C$, and we need to work in the
36-dimensional affine space. Suppose that the corresponding polynomial ring we need is $\C[x_{ij}\mid 1\leqslant i,j\leqslant 6]$ and denote by $I$ the vanishing ideal of  $\MLie(\uu_{3}(\C))$. We will give a Gr\"obner basis for $I$ with respect to the lexicographic ordering: $x_{11}>x_{12}>\cdots>x_{66}$ as previously, and decompose $I$ into three irreducible components 
 $\p_1,\p_2$, and $\p_3$. To describe elements in $\V(\p_i)$, we first define
$$P_{a,b,c,d,e,f,g}:=\begin{pmatrix}
    a &0 &0&d&0& f  \\
    0 &0 &0&0&0& 0  \\
    b&0 &0&e&0& g  \\
   a&0 &0&d&0& f  \\
    c&0 &0&-c&0& 0  \\
   a& 0&0&d&0& f  \\
\end{pmatrix},\quad Q_{a,b,c,d,e,f,g}:=\begin{pmatrix}
    a &0 &0&c&0& f  \\
    0 &0 &0&d&0& -d  \\
    b&0 &0&e&0& g  \\
   a&0 &0&c&0& f  \\
    0&0 &0&0&0& 0  \\
   a& 0&0&c&0& f  \\
\end{pmatrix}$$
and
$$T_{a,b,c,d,e}:=\begin{pmatrix}
    a &0 &0&c&0& d \\
    0 &e &0&0&0& 0  \\
    b&0 &e&0&0& -b  \\
   a-1&0 &0&c+1&0& d  \\
    0&0 &0&0&1& 0  \\
   a-1& 0&0&c&0& d+1  \\
\end{pmatrix}.$$

\begin{thm}\label{thm4.8}
The affine variety $\MLie(\uu_{3}(\C))$ can be decomposed into two 7-dimensional irreducible
components $\V(\p_{1}), \V(\p_{2})$ and one 5-dimensional irreducible component $\V(\p_{3})$, where
\begin{eqnarray*}
\V(\p_{1}) & = & \left\{P_{a,b,c,d,e,f,g}\mid a,b,c,d,e,f,g\in\C\right\}, \\
\V(\p_{2}) & = & \left\{Q_{a,b,c,d,e,f,g}\mid a,b,c,d,e,f,g\in\C\right\}, \\
\V(\p_{3}) & = & \left\{T_{a,b,c,d,e}\mid a,b,c,d,e\in\C\right\}.
\end{eqnarray*}
\end{thm}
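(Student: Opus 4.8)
The plan is to reduce this to a finite Gröbner-basis computation entirely parallel to the one carried out for $\MLie(\gl_2(\C))$ in Section~\ref{sec2}, since $\dim_\C(\uu_3(\C))=6$ and a multiplicative Hom-Lie structure $D$ is again an element of $M_6(\C)$ subject to finitely many polynomial equations. First I would fix the ordered basis $\{E_{11},E_{22},E_{33},E_{12},E_{23},E_{13}\}$ (the order chosen above to make the matrix shapes of $P,Q,T$ transparent) and write $D(E_{ij})=\sum a_{ij,k\ell}E_{k\ell}$, so that $D$ is identified with a generic $6\times6$ matrix over the polynomial ring $A':=\C[x_{rs}\mid 1\le r,s\le 6]$ in $36$ variables. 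Imposing that $D$ respect every bracket relation $D([E_{ij},E_{k\ell}])=[D(E_{ij}),D(E_{k\ell})]$ together with the Hom-Jacobi identity $[D(x),[y,z]]+\text{cyc}=0$ on basis triples produces an explicit finite list of generators of an ideal $I'\subseteq A'$ with $\MLie(\uu_3(\C))\subseteq\V(I')$, exactly as in Lemma~\ref{lem2.1}.

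Next I would exhibit the three prime ideals $\p_1,\p_2,\p_3$ whose varieties are the claimed sets $\{P_{a,b,c,d,e,f,g}\}$, $\{Q_{a,b,c,d,e,f,g}\}$, $\{T_{a,b,c,d}\}$. Each of these is a rational (in fact affine-linear in $P$- and $Q$-cases, and linear in the $T$-case) parametrization of an affine space, so primality is immediate: $A'/\p_i$ is visibly a polynomial ring in the free parameters once one eliminates the dependent coordinates, giving an integral domain, so no delicate colon-ideal or localization argument as in Lemma~\ref{lem2.3} is needed here. One then checks $I'\subseteq\p_i$ for $i=1,2,3$ by reducing each generator of $I'$ modulo $\p_i$ (this is the analogue of Lemma~\ref{lem2.6}), which gives $\V(\p_1)\cup\V(\p_2)\cup\V(\p_3)\subseteq\V(I')$. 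For the reverse inclusion $\V(I')\subseteq\V(\p_1)\cup\V(\p_2)\cup\V(\p_3)=\V(\p_1\cdot\p_2\cdot\p_3)$ one must verify $\p_1\cdot\p_2\cdot\p_3\subseteq\sqrt{I'}$; concretely I would compute a Gröbner basis of $I'$ (using a lexicographic or degrevlex order, via \cite[Lemma 1.1.8]{DK15}) and check that every generating product $b_1b_2b_3$ with $b_i$ a generator of $\p_i$ has zero normal form modulo $I'$, possibly after first passing to a suitable power — the same strategy used in Lemmas~\ref{lem2.7} and \ref{lem2.8}, where intermediate auxiliary ideals were introduced to keep the products manageable. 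Finally, the reverse containment $\V(\p_1)\cup\V(\p_2)\cup\V(\p_3)\subseteq\MLie(\uu_3(\C))$ is checked by directly verifying that each of $P_{a,b,c,d,e,f,g}$, $Q_{a,b,c,d,e,f,g}$, $T_{a,b,c,d}$ is a Lie-algebra homomorphism satisfying the Hom-Jacobi identity, a routine bracket computation. Irreducibility and the dimension counts ($7$, $7$, $4$) then follow from primality of the $\p_i$ and the number of free parameters in each parametrization, and one observes the three components are pairwise incomparable so the decomposition is irredundant.

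The main obstacle I anticipate is precisely the Gröbner-basis step establishing $\p_1\cdot\p_2\cdot\p_3\subseteq\sqrt{I'}$ (equivalently $\V(I')\subseteq\bigcup\V(\p_i)$): with $36$ variables and a radical (rather than exact) containment, the computation is considerably larger than the $16$-variable one of Section~\ref{sec2}, and I would expect to need to exploit the block structure of $I'$ — many of its generators are linear and can be used to eliminate variables first, cutting the ambient ring down drastically before running Buchberger's algorithm, just as $\B$ was replaced by $\B'$ in the proof of Lemma~\ref{lem2.8}. A secondary subtlety is ensuring the list of three components is \emph{complete}: after the Gröbner computation one should confirm no extra associated primes appear (e.g.\ by checking the primary decomposition of $I'$, or equivalently that $\sqrt{I'}=\p_1\cap\p_2\cap\p_3$), so that $\V(I')$ has exactly these three irreducible components and no more. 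Once these computational points are settled, the statement follows by assembling the inclusions as above, in complete analogy with Theorem~\ref{thm2.10}. This is why the authors state the result "without going into detailed proofs" — the method is identical, only the bookkeeping is heavier.
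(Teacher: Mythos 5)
Your proposal is correct and follows essentially the same route as the paper: the authors state that this theorem is obtained ``using the method appeared in Section 2 \dots without going into detailed proofs,'' and your outline (vanishing ideal $I'$ from the homomorphism and Hom-Jacobi conditions, the three prime ideals cutting out the parametrized families, Gr\"{o}bner-basis verification of both containments $\bigcup_i\V(\p_i)\subseteq\V(I')$ and $\p_1\cdot\p_2\cdot\p_3\subseteq\sqrt{I'}$, and a direct check that $P_{a,b,c,d,e,f,g}$, $Q_{a,b,c,d,e,f,g}$, $T_{a,b,c,d}$ lie in $\MLie(\uu_{3}(\C))$, with dimensions and irreducibility read off from the parametrizations) is exactly that method transplanted to the $36$-variable setting. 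The one simplification you rightly note is that here all three components are affine-linear in the parameters, so primality of the $\p_i$ is immediate and no analogue of the colon-ideal/localization argument of Lemma 2.3 is required.
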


\begin{proof}[Sketch of Proof] The method of proving this statement is basically the same as the proof sketch in Theorem \ref{thm4.7}. One of the essential points is to determine the generating set for  the vanishing ideal $I$. Here we define $I$ to be the ideal generated by $\cup_{i=1}^6\B_i$, where 
\begin{eqnarray*}
\B_1&:=&\{x_{11} - x_{55} - x_{61}, x_{12}, x_{13}, x_{14} - x_{64},x_{15}, x_{16} + x_{55} - x_{66}\}\\
\B_2&:=&\{x_{21}, x_{22} - x_{33}, x_{23}, x_{24} + x_{26}, x_{25},x_{26}x_{33}, x_{26}x_{54},x_{26}x_{55}\}\\
\B_3&:=&\{ x_{31}x_{33} + x_{33}x_{36},x_{31}x_{55} + x_{36}x_{55}, x_{32}, x_{33}x_{34}, x_{33}x_{54}, x_{33}x_{55} - x_{33}, x_{34}x_{55},x_{35}\}\\
\B_4&:=&\{x_{41} - x_{61},x_{42},x_{43},x_{44} - x_{55} - x_{64},x_{45},x_{46} + x_{55} - x_{66}\}\\
\B_5&:=&\{x_{51} + x_{54},x_{52},x_{53},x_{54}x_{55},x_{55}^2 - x_{55},x_{56}\}
\end{eqnarray*}
 and $\B_6:=\{x_{62},x_{63},x_{65}\}$. The union of these $\B_i$ is a Gr\"obner basis for $I$. Another key point is to determine the generators of the prime ideals $\p_1,\p_2$, and $\p_3$. In this case,  we can see from the matrix form of elements of $\V(\p_i)$ that they are generated by polynomials of degree 1. More precisely, $\p_1$ is generated by $\{x_{11} - x_{61},x_{12}, x_{13},x_{14} - x_{64},x_{15},x_{16} - x_{66}, x_{2i}, x_{32},x_{33},x_{35},x_{41} - x_{61},x_{42},
 x_{43},x_{44} - x_{64},x_{45},x_{46} - x_{66},x_{51} + x_{54},x_{52},x_{53},x_{55},x_{56},x_{62},x_{63},x_{65}\mid 1\leqslant i\leqslant 6\}$, $\p_2$ is generated by $\{x_{11} - x_{61}, x_{12}, x_{13}, x_{14} - x_{64}, x_{15}, x_{16} - x_{66}, x_{21}, x_{22},
 x_{23},x_{24} + x_{26},x_{25},x_{32}, x_{33},x_{35}, x_{41} - x_{61},x_{42},x_{43},$ $x_{44} - x_{64},x_{45},x_{46} - x_{66},$ $x_{51},x_{52},x_{53},x_{54},x_{55},x_{56},x_{62},x_{63},x_{65}\}$, and the ideal $\p_3$ is generated by 
 $\{x_{11} - x_{61},x_{12},x_{13},x_{14} - x_{64},x_{15},x_{16} - x_{66}, x_{2i},x_{32},x_{33},x_{35}, x_{41} - x_{61},x_{42},
x_{43},x_{44} - x_{64},x_{45},x_{46} - x_{66},x_{51} + x_{54},x_{52},x_{53},x_{55},x_{56},$ $x_{62},x_{63},x_{65}\mid 1\leqslant i\leqslant 6\}$. Gathering this information together and following the strategy  in Remark \ref{rem2.2} (or Theorem \ref{thm4.7}), one can prove the statement. 
\end{proof}

\begin{rem}
{\rm
Considering Theorem \ref{thm3.3} and the results of Theorems \ref{thm4.7} and \ref{thm4.8},  an open problem is to determine whether the affine variety $\MLie(\uu_{n}(\C))$ (for all $n\geqslant 2$) always has three irreducible components. 
 \hbo
}\end{rem}

\section{Derivations of Multiplicative Hom-Lie Algebras}\label{sec5}
\setcounter{equation}{0}
\renewcommand{\theequation}
{5.\arabic{equation}}
\setcounter{theorem}{0}
\renewcommand{\thetheorem}
{5.\arabic{theorem}}

\noindent Let $\g$ be a finite-dimensional complex Lie algebra and  $D\in\MLie(\g)$ be a  multiplicative
Hom-Lie algebra on $\g$. Let $k\in\N$ and recall that a linear transformation $\delta:\g\ra\g$ is called a $D^{k}$-\textit{derivation} of $D$ if  $D\circ \delta=\delta\circ D$ and $\delta([x,y])=[\delta(x),D^{k}(y)]+[D^{k}(x),\delta(y)]$
for every $x,y\in\g$. Here $D^{k}$ denotes the composite map of $k$ copies of $D$, with the convention that $D^{0}:=I_{\g}$ and $D^{1}:=D.$
We denote by $\Der_{k}(\g)$ the space of all $D^{k}$-derivations of $D$ and call
$$\Der_{D}(\g):=\bigoplus_{k=0}^{\infty}\Der_{k}(\g)$$
the \textit{derivation algebra} of $(\g,D)$, which is a Lie algebra with the bracket product:
$$[\delta,\tau]:=\delta\circ\tau-\tau\circ\delta.$$
Note that $[\delta,\tau]\in \Der_{k+s}(\g)$ for $\delta\in \Der_{k}(\g)$ and $\tau\in \Der_{s}(\g)$; see \cite[Section 3]{She12} for details.
We define the \textit{Hilbert series} of $\Der_{D}(\g)$ to be the formal series:
$$\HH(\Der_{D}(\g),t):=\sum_{k=0}^{\infty}\dim_{\C}(\Der_{k}(\g))\cdot t^{k}$$
where $t$ denotes a real indeterminant. In order to make the geometric series $\sum_{k=0}^\infty t^k$ convergent,  we need to assume that $|t|<1$, except for the second statement of Theorem \ref{thm5.5} where we assume that $|t^{m-1}|<1$.
Let $\Der(\g)$ be the usual derivation algebra of $\g$.
We start this section with the following two general properties.

\begin{prop}
The space $\Der_{0}(\g)$ is a Lie subalgebra of $\Der(\g)$.
\end{prop}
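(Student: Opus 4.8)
The plan is to verify directly that $\Der_0(\g)$ is closed under the bracket $[\delta,\tau]=\delta\circ\tau-\tau\circ\delta$ and that it is a subspace of $\Der(\g)$; the Jacobi identity and bilinearity are then inherited from $\Der(\g)$. First I would unwind the definition: an element $\delta\in\Der_0(\g)$ is a linear map satisfying $D\circ\delta=\delta\circ D$ together with $\delta([x,y])=[\delta(x),D^0(y)]+[D^0(x),\delta(y)]=[\delta(x),y]+[x,\delta(y)]$, since $D^0=I_\g$ by convention. Thus the second condition says precisely that $\delta$ is an ordinary derivation of $\g$, so $\Der_0(\g)\subseteq\Der(\g)$ is immediate; the only extra constraint is the commuting condition $D\delta=\delta D$.

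Next I would check linearity of $\Der_0(\g)$ inside $\Der(\g)$: if $\delta,\tau\in\Der_0(\g)$ and $\lambda\in\C$, then $\delta+\lambda\tau$ is again an ordinary derivation, and $D(\delta+\lambda\tau)=D\delta+\lambda D\tau=\delta D+\lambda\tau D=(\delta+\lambda\tau)D$, so $\Der_0(\g)$ is a subspace. The main point is closure under the commutator. For $\delta,\tau\in\Der_0(\g)$, the composite $[\delta,\tau]=\delta\tau-\tau\delta$ lies in $\Der_{0+0}(\g)=\Der_0(\g)$ by the grading property $[\Der_k(\g),\Der_s(\g)]\subseteq\Der_{k+s}(\g)$ already recorded in the text (citing \cite[Section 3]{She12}); alternatively one verifies it by hand. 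Concretely, $D$ commutes with $[\delta,\tau]$ because $D\delta\tau=\delta D\tau=\delta\tau D$ and likewise $D\tau\delta=\tau\delta D$, so $D([\delta,\tau])=([\delta,\tau])D$; and the fact that the commutator of two ordinary derivations is an ordinary derivation is classical (and is the $k=s=0$ case of the grading statement). Hence $[\delta,\tau]\in\Der_0(\g)$.

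The argument has essentially no obstacle: the subtlety, such as it is, is only the bookkeeping observation that $D^0=I_\g$ collapses the $D^k$-derivation condition at $k=0$ to the ordinary Leibniz rule, so that $\Der_0(\g)$ is literally $\{\delta\in\Der(\g)\mid D\delta=\delta D\}$, the centralizer of $D$ in $\Der(\g)$ — manifestly a Lie subalgebra. I would state the proof in two or three lines along exactly these lines, invoking the grading property from the preamble for the closure under bracket.
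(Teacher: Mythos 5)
Your proposal is correct and follows essentially the same route as the paper: identify $\Der_{0}(\g)$ as the ordinary derivations of $\g$ commuting with $D$, then check closure under the commutator using the commuting condition (the paper relegates the classical fact that a bracket of derivations is a derivation to the background, exactly as you note). No gaps; your version is just slightly more explicit about the $D^{0}=I_{\g}$ bookkeeping and the subspace check.
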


\begin{proof}
As the space $\Der_{0}(\g)$ consists of all derivations of $\g$ that commute with $D$, it follows that
$\Der_{0}(\g)$ is a subspace of $\Der(\g)$. Taking arbitrary $\delta,\tau\in \Der_{0}(\g)$, we have
$[\delta,\tau]\circ D=(\delta\circ\tau-\tau\circ\delta)\circ D=D\circ (\delta\circ\tau-\tau\circ\delta)=D\circ[\delta,\tau]$.
Thus $\Der_{0}(\g)$ is a Lie algebra.
\end{proof}

\begin{prop}
The left multiplication with $D$ gives rise to a linear map
$$\rho_{D}^{k}:\Der_{k}(\g)\ra \Der_{k+1}(\g),\quad \delta\mapsto D\circ \delta$$
for every $k\in \N$.
\end{prop}

\begin{proof}
It is immediate to see that $\rho_{D}^{k}$ is  linear. To complete the proof, it suffices to show that
$D\circ \delta\in \Der_{k+1}(\g)$ for each $\delta\in \Der_{k}(\g)$. Indeed,
$D\circ(D\circ \delta)-(D\circ \delta)\circ D=D\circ(D\circ \delta)-D\circ (\delta\circ D)=
D\circ(D\circ \delta)-D\circ (D\circ\delta)=0$; thus $D$ and $D\circ \delta$ are commutes.  For every $x,y\in\g$, since
$\delta([x,y])=[\delta(x),D^{k}(y)]+[D^{k}(x),\delta(y)]$ and $D$ is an algebra homomorphism, it follows that
$$D\circ\delta([x,y])=[D\circ\delta(x),D^{k+1}(y)]+[D^{k+1}(x),D\circ\delta(y)].$$
Hence, $\rho_{D}^{k}$ is a linear map.
\end{proof}

\begin{coro}\label{coro5.3}
If $D$ is invertible, then $\rho_{D}^{k}$ is a linear isomorphism.
\end{coro}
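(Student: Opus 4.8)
The statement to prove is Corollary \ref{coro5.3}: if $D$ is invertible, then $\rho_D^k:\Der_k(\g)\ra\Der_{k+1}(\g)$ is a linear isomorphism. The plan is to exhibit an explicit two-sided inverse, namely left multiplication by $D^{-1}$. First I would note that since $D\in\MLie(\g)$ is invertible, it is an automorphism of $\g$ (it is a bijective Lie algebra homomorphism), so $D^{-1}$ is again a Lie algebra homomorphism on $\g$. The candidate inverse map is $\sigma^{k}:\Der_{k+1}(\g)\ra\Der_{k}(\g)$, $\tau\mapsto D^{-1}\circ\tau$.

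The key steps, in order, are: (1) check that $\sigma^k$ is well-defined, i.e. that $D^{-1}\circ\tau\in\Der_k(\g)$ whenever $\tau\in\Der_{k+1}(\g)$; (2) verify that $\sigma^k$ and $\rho_D^k$ are mutually inverse as linear maps. For step (1), given $\tau\in\Der_{k+1}(\g)$, one has $D\circ\tau=\tau\circ D$, and composing on both sides with $D^{-1}$ yields $D^{-1}\circ(D^{-1}\circ\tau)= D^{-1}\circ(\tau\circ D^{-1})$, hence $D^{-1}\circ\tau$ commutes with $D$ (equivalently with $D^{-1}$). For the Leibniz-type condition, applying $D^{-1}$ to both sides of $\tau([x,y])=[\tau(x),D^{k+1}(y)]+[D^{k+1}(x),\tau(y)]$ and using that $D^{-1}$ is an algebra homomorphism gives
$$D^{-1}\circ\tau([x,y])=[D^{-1}\tau(x),D^{k}(y)]+[D^{k}(x),D^{-1}\tau(y)],$$
which is exactly the defining identity for a $D^k$-derivation (here we use $D^{-1}\circ D^{k+1}=D^k$). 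Thus $D^{-1}\circ\tau\in\Der_k(\g)$, so $\sigma^k$ is well-defined; this mirrors the argument already given for $\rho_D^k$ in the preceding proposition.

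For step (2), for $\delta\in\Der_k(\g)$ we have $\sigma^k(\rho_D^k(\delta))=D^{-1}\circ(D\circ\delta)=\delta$, and for $\tau\in\Der_{k+1}(\g)$ we have $\rho_D^k(\sigma^k(\tau))=D\circ(D^{-1}\circ\tau)=\tau$; linearity of both maps is immediate (or inherited from the preceding propositions). Hence $\rho_D^k$ is a linear isomorphism with inverse $\sigma^k$. I do not anticipate a genuine obstacle here: the only point requiring minor care is step (1), namely confirming that $D^{-1}$ is a homomorphism (so that the bracket identity is preserved under composition with $D^{-1}$) — this follows because an invertible algebra homomorphism has a homomorphic inverse — and that the exponent bookkeeping $D^{-1}\circ D^{k+1}=D^k$ lines up correctly. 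Everything else is a routine symmetric mirror of the argument establishing that $\rho_D^k$ is well-defined.
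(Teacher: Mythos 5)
Your proposal is correct and follows the same route as the paper: the paper's proof simply observes that left multiplication by $D^{-1}$ gives a linear map $\rho_{D^{-1}}^{k+1}:\Der_{k+1}(\g)\ra\Der_{k}(\g)$ which is the inverse of $\rho_{D}^{k}$, exactly the map $\sigma^{k}$ you construct. You merely spell out the well-definedness check (using that $D^{-1}$ is again a homomorphism) that the paper leaves implicit.
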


\begin{proof}
Clearly, the left multiplication with $D^{-1}$ gives rise to a linear map
$\rho_{D^{-1}}^{k+1}:\Der_{k+1}(\g)\ra \Der_{k}(\g)$ and $(\rho_{D}^{k})^{-1}=\rho_{D^{-1}}^{k+1}$.
\end{proof}

\begin{thm}\label{thm5.4}
Let $D$ be a regular  Hom-Lie algebra on $\g$. Then
$$\HH(\Der_{D}(\g),t)=\frac{\dim_{\C}(\Der_{0}(\g))}{1-t}.$$
In particular, $\dim_{\C}(\Der_{D}(\g))$ is either zero or infinite.
\end{thm}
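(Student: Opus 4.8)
The plan is to exploit the isomorphisms $\rho_D^k$ furnished by Corollary \ref{coro5.3} to show that all the graded pieces $\Der_k(\g)$ have the same dimension, and then sum the resulting geometric series. First I would observe that since $D$ is regular, it is in particular invertible, so Corollary \ref{coro5.3} applies and tells us that $\rho_D^k:\Der_k(\g)\ra\Der_{k+1}(\g)$ is a linear isomorphism for every $k\in\N$. Composing these isomorphisms $\rho_D^{k-1}\circ\cdots\circ\rho_D^0$ yields a linear isomorphism $\Der_0(\g)\cong\Der_k(\g)$, and hence $\dim_\C(\Der_k(\g))=\dim_\C(\Der_0(\g))$ for all $k$.

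Next I would substitute this equality into the definition of the Hilbert series:
\begin{equation*}
\HH(\Der_{D}(\g),t)=\sum_{k=0}^{\infty}\dim_{\C}(\Der_{k}(\g))\cdot t^{k}=\dim_{\C}(\Der_{0}(\g))\cdot\sum_{k=0}^{\infty}t^{k}=\frac{\dim_{\C}(\Der_{0}(\g))}{1-t},
\end{equation*}
the last identity being the usual formal geometric series expansion in $\C[[t]]$. For the final assertion, if $\dim_\C(\Der_0(\g))=0$ then every $\Der_k(\g)$ is zero and so $\Der_D(\g)=0$; otherwise $\dim_\C(\Der_0(\g))\geqslant 1$, and since the direct sum $\bigoplus_{k=0}^\infty\Der_k(\g)$ has infinitely many nonzero summands each of dimension $\dim_\C(\Der_0(\g))$, the total dimension $\dim_\C(\Der_D(\g))$ is infinite. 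Thus $\dim_\C(\Der_D(\g))$ is either zero or infinite, with no finite positive value possible.

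The argument is essentially formal once Corollary \ref{coro5.3} is in hand, so there is no serious obstacle; the only point requiring a little care is to make sure the chain of isomorphisms is set up correctly (i.e.\ that $\Der_0(\g)$ is genuinely the base case and that regularity of $D$ — being an automorphism — does indeed give invertibility, which it does by definition). One should also note that the statement is purely at the level of formal power series, so no convergence considerations enter; the identity $\sum_{k\geqslant 0}t^k=1/(1-t)$ is understood in the ring $\C[[t]]$ (or as an identity of rational functions), which is exactly the sense in which the theorem claims $\HH(\Der_D(\g),t)$ is rational.
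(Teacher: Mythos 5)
Your argument is correct and is essentially the paper's own proof: both invoke Corollary \ref{coro5.3} (regularity of $D$ gives invertibility, hence each $\rho_D^k$ is an isomorphism) to conclude that all $\Der_k(\g)$ share the dimension of $\Der_0(\g)$, and then sum the formal geometric series to obtain $\dim_{\C}(\Der_{0}(\g))/(1-t)$, with the zero-or-infinite dichotomy following immediately. No gaps; the only difference is that you spell out the composition of the isomorphisms explicitly, which the paper leaves implicit.
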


\begin{proof}
Since $D$ is invertible, it follows from Corollary \ref{coro5.3} that $\dim_{\C}(\Der_{0}(\g))=\dim_{\C}(\Der_{1}(\g))=\dim_{\C}(\Der_{2}(\g))=\cdots$. Suppose $\dim_{\C}(\Der_{0}(\g))=\ell$. Thus
\begin{eqnarray*}
\HH(\Der_{D}(\g),t)&=&\ell\cdot t^{0}+\ell\cdot t^{1}+\ell\cdot t^{2}+\cdots=\ell\cdot(1+t+t^{2}+\cdots)=\frac{\ell}{1-t}.
\end{eqnarray*}
In particular, if $\Der_{0}(\g)=\{0\}$, then $\dim_{\C}(\Der_{D}(\g))=0$; if  $\Der_{0}(\g)\neq\{0\}$, then $\dim_{\C}(\Der_{D}(\g))$ is infinite.
\end{proof}

\begin{thm}\label{thm5.5}
Let $D$ be a Hom-Lie algebra on $\g$.
\begin{enumerate}
  \item If $D$ is nilpotent, then there exists a polynomial function $f(t)\in\Z[t]$ such that
$$\HH(\Der_{D}(\g),t)=\frac{f(t)}{1-t}.$$
  \item If $D^{m}=D$ for some integer $m\geqslant 2$, then there also exists a polynomial function $f(t)\in\Z[t]$ such that
$$\HH(\Der_{D}(\g),t)=\frac{f(t)}{1-t^{m-1}}.$$
\end{enumerate}
\end{thm}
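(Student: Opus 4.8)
The key observation is that the sequence of left-multiplication maps $\rho_D^k\colon\Der_k(\g)\to\Der_{k+1}(\g)$ from the previous proposition lets us compare dimensions $d_k:=\dim_\C(\Der_k(\g))$ as $k$ varies, and the hypotheses on $D$ make the relevant maps eventually stabilize or become periodic. First I would record the basic structural fact underlying both parts: each $\rho_D^k$ is a linear map, so $\ker\rho_D^k\subseteq\Der_k(\g)$ consists precisely of those $\delta$ with $D\circ\delta=0$, i.e.\ $\mathrm{im}(\delta)\subseteq\ker D$. In particular $\ker\rho_D^k$ depends on $k$ only through the condition ``$\delta\in\Der_k(\g)$ and $\mathrm{im}(\delta)\subseteq\ker D$'', and one checks readily that if $\mathrm{im}(\delta)\subseteq\ker D$ then the defining identity $\delta([x,y])=[\delta(x),D^k(y)]+[D^k(x),\delta(y)]$ is independent of $k\geqslant 1$ (both terms lie in $[\ker D,\g]$ and are controlled by $D^k$ acting on the other slot, but since one slot is killed this is a $k$-independent constraint once $k\geqslant 1$). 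So there is an integer $N$ and a fixed subspace $K$ with $\ker\rho_D^k=K$ for all $k\geqslant 1$, and $\ker\rho_D^0$ a possibly different space $K_0$.

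\textbf{Part (1): $D$ nilpotent.} Say $D^r=0$. Then for $\delta\in\Der_k(\g)$ with $k\geqslant 1$ the defining identity reads $\delta([x,y])=[\delta(x),D^k(y)]+[D^k(x),\delta(y)]$, and once $k\geqslant r$ the right-hand side vanishes identically, so $\Der_k(\g)=\{\delta\in\mathrm{End}_\C(\g):D\circ\delta=\delta\circ D,\ \delta([x,y])=0\ \forall x,y\}$ is a fixed space for all $k\geqslant r$; call its dimension $d_\infty$. Hence $d_k=d_\infty$ for all $k\geqslant r$, and
$$\HH(\Der_D(\g),t)=\sum_{k=0}^{r-1}d_k t^k+\sum_{k=r}^{\infty}d_\infty t^k=\sum_{k=0}^{r-1}(d_k-d_\infty)t^k+\frac{d_\infty t^r}{1-t}\cdot\frac{1}{1}+\cdots$$
more cleanly: write $\HH(\Der_D(\g),t)=\bigl(\sum_{k=0}^{r-1}d_k t^k\bigr)+d_\infty t^r/(1-t)$, and clearing the denominator gives $\HH(\Der_D(\g),t)=f(t)/(1-t)$ with $f(t)=(1-t)\sum_{k=0}^{r-1}d_k t^k+d_\infty t^r\in\Z[t]$. (If one prefers an explicit degree bound, note $f$ has degree at most $r$.)

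\textbf{Part (2): $D^m=D$.} Here the idea is $m-1$-periodicity of the tail. Since $D^m=D$, for any $k\geqslant 1$ one has $D^{k+m-1}=D^k$, because $D^{k+m-1}=D^{k-1}\circ D^m=D^{k-1}\circ D=D^k$. Consequently the defining identity for $\Der_k(\g)$ with exponent $D^k$ coincides with that for $\Der_{k+m-1}(\g)$ with exponent $D^{k+m-1}=D^k$ whenever $k\geqslant 1$; the commutation condition $D\circ\delta=\delta\circ D$ is the same in all degrees. Therefore $\Der_k(\g)=\Der_{k+m-1}(\g)$ as subspaces of $\mathrm{End}_\C(\g)$ for every $k\geqslant 1$, so the sequence $(d_k)_{k\geqslant 1}$ is purely periodic with period dividing $m-1$, say $d_k=d_{((k-1)\bmod(m-1))+1}$ for $k\geqslant 1$. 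Splitting off the $k=0$ term,
$$\HH(\Der_D(\g),t)=d_0+\sum_{k\geqslant 1}d_k t^k=d_0+\Bigl(\sum_{j=1}^{m-1}d_j t^j\Bigr)\cdot\frac{1}{1-t^{m-1}},$$
and clearing denominators yields $\HH(\Der_D(\g),t)=f(t)/(1-t^{m-1})$ with $f(t)=d_0(1-t^{m-1})+\sum_{j=1}^{m-1}d_j t^j\in\Z[t]$, which is the claimed form.

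\textbf{Main obstacle.} The only nontrivial point — and the step I would be most careful about — is the verification that the $D^k$-derivation condition genuinely stabilizes (part 1) or becomes periodic (part 2) \emph{as a set of linear constraints on $\delta$}, not merely that $D^k$ repeats as an operator. For part (2) this is clean since $D^{k+m-1}=D^k$ for $k\geqslant 1$ makes the identity literally identical. For part (1) the subtlety is that "$D^k(y)=0$ for all $y$ once $k\geqslant r$" is exactly what forces the right-hand side to die, so the bound $N=r$ works; one should double check the edge behaviour at small $k$ (e.g.\ $k=0$, where $D^0=I$ and the condition is the ordinary derivation identity together with $D\circ\delta=\delta\circ D$) is simply absorbed into the polynomial numerator, which it is. No Gröbner-basis machinery is needed here; the argument is purely a telescoping/periodicity computation on Hilbert series once the stabilization lemma is in hand.
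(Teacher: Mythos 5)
Your argument is correct and is essentially the same as the paper's: in both parts you identify exactly the stabilization ($\Der_k(\g)$ constant for $k\geqslant r$ when $D^r=0$) and the periodicity ($\Der_k(\g)=\Der_{k+m-1}(\g)$ for $k\geqslant 1$ when $D^m=D$) that the paper uses, and then perform the same geometric-series computation with the same numerator $f(t)$. The opening paragraph about $\ker\rho_D^k$ is superfluous (and not fully justified), but nothing in your actual proof of (1) or (2) relies on it.
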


\begin{proof} Let $\dim_{\C}(\Der_{k}(\g))=\ell_{k}$ for $k\in\N^{+}$.
We first consider the case where $D$ is nilpotent, i.e., $D^{n}=0$ for some $n\in\N^{+}$. Then $\Der_{k}(\g)=\{\delta\in\gl(\g)\mid D\circ\delta=\delta\circ D \textrm{ and } \delta([x,y])=0\textrm{ for all }x,y\in\g\}$ for $k\geqslant n$. Thus
\begin{eqnarray*}
\HH(\Der_{D}(\g),t)&=&\ell_{0}+\ell_{1}\cdot t+\cdots+\ell_{n-1}\cdot t^{n-1}+\ell_{n}\cdot t^{n}+\ell_{n}\cdot t^{n+1}+\cdots\\
&=&\ell_{0}+\ell_{1}\cdot t+\cdots+\ell_{n-1}\cdot t^{n-1}+\ell_{n}\cdot t^{n}\cdot(1+t+t^{2}+\cdots)\\
&=&\ell_{0}+\ell_{1}\cdot t+\cdots+\ell_{n-1}\cdot t^{n-1}+\frac{\ell_{n}\cdot t^{n}}{1-t}.
\end{eqnarray*}
Let $f(t)=(1-t)(\sum_{k=0}^{n-1}\ell_{k}\cdot t^{k})+\ell_{n}\cdot t^{n}$. Then $\HH(\Der_{D}(\g),t)=f(t)/(1-t).$

For the second case, we observe that $\Der_{k}(\g)=\Der_{s}(\g)$ if $k-s\equiv 0\mod (m-1)$ for all $k,s>0$. Thus
\begin{eqnarray*}
\HH(\Der_{D}(\g),t)
&=&\ell_{0}+\ell_{1}\cdot t+\cdots+\ell_{m-1}\cdot t^{m-1}+\ell_{1}\cdot t^{m}+\ell_{2}\cdot t^{m+1}+\cdots\\
&=&\ell_{0}+\ell_{1}\cdot t\cdot(1+t^{m-1}+t^{2(m-1)}+\cdots)+\ell_{2}\cdot t^{2}\cdot(1+t^{m-1}+t^{2(m-1)}+\cdots)\\
&&+\cdots+ \ell_{m-1}\cdot t^{m-1}\cdot(1+t^{m-1}+t^{2(m-1)}+\cdots)\\
&=&\ell_{0}+\frac{\ell_{1}\cdot t}{1-t^{m-1}}+\frac{\ell_{2}\cdot t^{2}}{1-t^{m-1}}+\cdots+\frac{\ell_{m-1}\cdot t^{m-1}}{1-t^{m-1}}.
\end{eqnarray*}
Let $f(t)=\ell_{0}\cdot(1-t^{m-1})+\sum_{k=1}^{m-1} \ell_{k}\cdot t^{k}$. Then $\HH(\Der_{D}(\g),t)=f(t)/(1-t^{m-1}).$
\end{proof}

 The following example illustrates how to use the method mentioned above to explicitly
calculate the Hilbert series of the derivation algebra of a Hom-Lie algebra.

\begin{exam}{\rm
We consider the multiplicative Hom-Lie algebra $C_{a}$ on $\gl_{2}(\C)$ appeared in Corollary \ref{coro2.4}. We observe that
$$C_{a}^{k}=\begin{pmatrix}
           2^{k-1}a^{k}& 0&0& 2^{k-1}a^{k}\\
           0& 0&0& 0\\
            0& 0&0& 0\\
            2^{k-1}a^{k}& 0&0& 2^{k-1}a^{k}\\
         \end{pmatrix}.$$
Thus $C_{1/2}^{k}=C_{1/2}$ for $k>1$ and so in order to give an explicit formula for the Hilbert series $\HH(\Der_{C_{1/2}}(\gl_{2}(\C),t))$,
it is sufficient to examine the dimensions of $\Der_{0}(\gl_{2}(\C))$ and $\Der_{1}(\gl_{2}(\C))$ for $D=C_{1/2}$.
A long but direct calculation shows that the usual derivation algebra $\Der(\gl_{2}(\C))$ is a 4-dimensional irreducible affine variety (also a 4-dimensional vector space over $\C$), consisting of all derivations with the following forms:
$$\begin{pmatrix}
     e & c&b&e   \\
      -b&-d  &0&b\\
      -c&0&d&c\\
      e&-c&-b&e
\end{pmatrix}$$
where $b,c,d,e\in\C$. Now it is easy to check that each element of $\Der(\gl_{2}(\C))$ commutes with $C_{a}$.
Hence, $\Der_{0}(\gl_{2}(\C))=\Der(\gl_{2}(\C))$. A direct calculation shows that $\Der_{1}(\gl_{2}(\C))=\V(\p_{1})$, where
$\p_{1}$ is defined as in Corollary \ref{coro2.4}. Therefore,
$$\HH(\Der_{C_{1/2}}(\gl_{2}(\C),t))=4+t+t^{2}+\cdots=\frac{4-3t}{1-t}$$
is a rational function. \hbo
}\end{exam}

We close this paper with a remark that might give readers a hint to use the method in other possible related directions.

\begin{rem}{\rm
It has been shown recently that the method of our article is useful to deal with some linear structures on Lie algebras; see
\cite[Section 5]{CCZ21}. Except for Hom-Lie algebras, we also note that there exists a relatively new notion of nonassociative algebras, $\omega$-Lie algebras, that contain Lie algebras as a subclass and have attracted many researchers' attention; see for example, \cite{CLZ14, CZ17,CZZZ18, Zha21}. Our method appeared in this paper might be applied to studying these $\omega$-algebra structures.
\hbo}\end{rem}

\begin{bibdiv}
\begin{biblist}

\bib{AC19}{article}{
   author={Alvarez, Mar\'{\i}a Alejandra},
   author={Cartes, Francisco},
   title={Cohomology and deformations for the Heisenberg Hom-Lie algebras},
   journal={Linear Multilinear Algebra},
   volume={67},
   date={2019},
   number={11},
   pages={2209--2229},
}

\bib{Bau99}{article}{
   author={Baues, Oliver},
   title={Left-symmetric algebras for $\germ{gl}(n)$},
   journal={Trans. Amer. Math. Soc.},
   volume={351},
   date={1999},
   number={7},
   pages={2979--2996},
}

\bib{BM14}{article}{
   author={Benayadi, Sa\"{\i}d},
   author={Makhlouf, Abdenacer},
   title={Hom-Lie algebras with symmetric invariant nondegenerate bilinear
   forms},
   journal={J. Geom. Phys.},
   volume={76},
   date={2014},
   pages={38--60},
}

\bib{CCZ21}{article}{
   author={Chang, Hongliang},
   author={Chen, Yin},
   author={Zhang, Runxuan},
   title={A generalization on derivations of Lie algebras},
   journal={Electron. Res. Arch.},
   volume={29},
   date={2021},
   number={3},
  pages={2457--2473},
}

\bib{CLZ14}{article}{
   author={Chen, Yin},
   author={Liu, Chang},
   author={Zhang, Runxuan},
   title={Classification of three-dimensional complex $\omega$-Lie algebras},
   journal={Port. Math.},
   volume={71},
   date={2014},
   number={2},
   pages={97--108},
   issn={0032-5155},
}

\bib{CZZZ18}{article}{
   author={Chen, Yin},
   author={Zhang, Ziping},
   author={Zhang, Runxuan},
   author={Zhuang, Rushu},
   title={Derivations, automorphisms, and representations of complex
   $\omega$-Lie algebras},
   journal={Comm. Algebra},
   volume={46},
   date={2018},
   number={2},
   pages={708--726},
   issn={0092-7872},
}

\bib{CZ17}{article}{
   author={Chen, Yin},
   author={Zhang, Runxuan},
   title={Simple $\omega$-Lie algebras and 4-dimensional $\omega$-Lie
   algebras over $\Bbb{C}$},
   journal={Bull. Malays. Math. Sci. Soc.},
   volume={40},
   date={2017},
   number={3},
   pages={1377--1390},
   issn={0126-6705},
}

\bib{CLO07}{book}{
   author={Cox, David},
   author={Little, John},
   author={O'Shea, Donal},
   title={Ideals, varieties, and algorithms},
   series={Undergraduate Texts in Mathematics},
   edition={3},
   publisher={Springer, New York},
   date={2007},
}

\bib{DK15}{book}{
   author={Derksen, Harm},
   author={Kemper, Gregor},
   title={Computational invariant theory},
   series={Encyclopaedia of Mathematical Sciences},
   volume={130},
   edition={Second enlarged edition},
   publisher={Springer, Heidelberg},
   date={2015},
}

\bib{GDSSV20}{article}{
   author={Garc\'{\i}a-Delgado, R.},
   author={Salgado, G.},
   author={S\'{a}nchez-Valenzuela, O. A.},
   title={On 3-dimensional complex Hom-Lie algebras},
   journal={J. Algebra},
   volume={555},
   date={2020},
   pages={361--385},
}

\bib{GK96}{book}{
   author={Goze, M.},
   author={Khakimdjanov, Y.},
   title={Nilpotent Lie algebras},
   series={Mathematics and its Applications},
   volume={361},
   publisher={Kluwer Academic Publishers Group, Dordrecht},
   date={1996},
}

\bib{HLS06}{article}{
   author={Hartwig, Jonas T.},
   author={Larsson, Daniel},
   author={Silvestrov, Sergei D.},
   title={Deformations of Lie algebras using $\sigma$-derivations},
   journal={J. Algebra},
   volume={295},
   date={2006},
   number={2},
   pages={314--361},
}

\bib{JL08}{article}{
   author={Jin, Quanqin},
   author={Li, Xiaochao},
   title={Hom-Lie algebra structures on semi-simple Lie algebras},
   journal={J. Algebra},
   volume={319},
   date={2008},
   number={4},
   pages={1398--1408},
}

\bib{MS08}{article}{
   author={Makhlouf, Abdenacer},
   author={Silvestrov, Sergei D.},
   title={Hom-algebra structures},
   journal={J. Gen. Lie Theory Appl.},
   volume={2},
   date={2008},
   number={2},
   pages={51--64},
}

\bib{NS02}{book}{
   author={Neusel, Mara D.},
   author={Smith, Larry},
   title={Invariant theory of finite groups},
   series={Mathematical Surveys and Monographs},
   volume={94},
   publisher={American Mathematical Society, Providence, RI},
   date={2002},
}

\bib{OOS19}{article}{
   author={Ongong'a, E.},
   author={Ongaro, J.},
   author={Silvestrov, S.},
   title={Hom-Lie structures on complex 4-dimensional Lie algebras},
   note={International Workshop on Lie Theory and Its Applications in Physics},
   pages={373--381},
   publisher={Springer, Singapore},
   date={2019},
}

\bib{Rem18}{article}{
   author={Remm, Elisabeth},
   title={3-dimensional skew-symmetric algebras and the variety of Hom-Lie
   algebras},
   journal={Algebra Colloq.},
   volume={25},
   date={2018},
   number={4},
   pages={547--566},
}

\bib{She12}{article}{
   author={Sheng, Yunhe},
   title={Representations of Hom-Lie algebras},
   journal={Algebr. Represent. Theory},
   volume={15},
   date={2012},
   number={6},
   pages={1081--1098},
}

\bib{Vas98}{book}{
   author={Vasconcelos, Wolmer V.},
   title={Computational methods in commutative algebra and algebraic
   geometry},
   series={Algorithms and Computation in Mathematics},
   volume={2},
   publisher={Springer-Verlag, Berlin},
   date={1998},
}

\bib{XJL15}{article}{
   author={Xie, Wenjuan},
   author={Jin, Quanqin},
   author={Liu, Wende},
   title={${\rm Hom}$-structures on semi-simple Lie algebras},
   journal={Open Math.},
   volume={13},
   date={2015},
   number={1},
   pages={617--630},
}

\bib{Yau11}{article}{
   author={Yau, Donald},
   title={Hom-Novikov algebras},
   journal={J. Phys. A},
   volume={44},
   date={2011},
   number={8},
   pages={085202, 20},
}

\bib{Zha21}{article}{
   author={Zhang, Runxuan},
   title={Representations of $\omega$-Lie algebras and tailed derivations of Lie algebras},
   journal={Internat. J. Algebra Comput.},
  volume={31},
   date={2021},
   number={2},
  pages={325--339},
}

\bib{ZHB11}{article}{
   author={Zhang, Runxuan},
   author={Hou, Dongping},
   author={Bai, Chengming},
   title={A Hom-version of the affinizations of Balinskii-Novikov and
   Novikov superalgebras},
   journal={J. Math. Phys.},
   volume={52},
   date={2011},
   number={2},
   pages={023505, 19},
}

 \end{biblist}
\end{bibdiv}

\raggedright
\end{document}